\newtheorem{theorem}{Theorem}[section]
\newtheorem{lemma}[theorem]{Lemma}
\newtheorem{remark}[theorem]{Remark}
\newcommand{\vertiii}[1]{{\vert\kern-0.25ex\vert\kern-0.25ex\vert #1 
    \vert\kern-0.25ex\vert\kern-0.25ex\vert}}
\newcommand{\norm}[1]{\lVert#1\rVert}
\newcommand{\seminorm}[1]{| #1 |}
\newcommand{\snorm}[1]{\vertiii{ #1 }_{*}}
\newcommand{\bnorm}[1]{\vertiii{ #1 }_{\mathcal{B}_h}}
\title{Discontinuous Galerkin approximation of linear parabolic problems with  dynamic boundary conditions}
\author{
P.F. Antonietti \thanks{MOX-Dipartimento di Matematica, Politecnico di Milano, P.zza Leonardo Da Vinci 32, I-20133 Milano, Italy (paola.antonietti@polimi.it).}
\and M. Grasselli \thanks{Dipartimento di Matematica, Politecnico di Milano, P.zza Leonardo Da Vinci 32, I-20133 Milano, Italy (maurizio.grasselli@polimi.it).}
\and S. Stangalino\thanks{MOX-Dipartimento di Matematica, Politecnico di Milano, P.zza Leonardo Da Vinci 32, I-20133 Milano, Italy (simone.stangalino@polimi.it).}
\and M. Verani\thanks{MOX-Dipartimento di Matematica, Politecnico di Milano, P.zza Leonardo Da Vinci 32, I-20133 Milano, Italy (marco.verani@polimi.it).}}
\begin{document}
\maketitle

\begin{abstract}
In this paper we propose and analyze a Discontinuous Galerkin method for a 
linear parabolic problem with dynamic boundary conditions. We present the formulation and prove stability 
and optimal a priori error estimates for the fully discrete scheme. More precisely, using polynomials of degree $p\geq 1$ on meshes with granularity $h$ along with a backward Euler time-stepping scheme with time-step $\Delta t$, we prove that the fully-discrete solution is bounded by the data
and it
converges, in a suitable (mesh-dependent) energy norm, to the exact solution with optimal order $h^p + \Delta t$. The sharpness of the theoretical estimates are verified through several numerical experiments. 
\end{abstract}

\section{Introduction}

In this paper we present and analyze a Discontinuous Galerkin (DG) method for the following linear parabolic problem  supplemented with dynamic boundary conditions on $\Gamma_1$:
\begin{equation}
\left\{\label{pb:intro}\begin{array}{lll}
\partial_t u &= \Delta u + f, &\text{ in }\Omega,\ 0<t\leq T,\\
\partial_n u &= -\alpha u + \beta \Delta_{\Gamma}u - \lambda \partial_t u + g,&\text{ on }\Gamma_1,\ 0<t\leq T,\\
\multicolumn{2}{l}{\text{periodic boundary conditions},}&\text{ on }\Gamma_2,\ 0<t\leq T,\\
u_{|t=0}&=u_0, & \text{ in } \overline{\Omega}.
\end{array}\right.
\end{equation}
Here the domain $\Omega$ and the subsets $\Gamma_i\subset \partial \Omega$, $i=1,2$, are depicted in Figure~\ref{identification},
$\Delta_{\Gamma}$ is the Laplace-Beltrami operator, $\partial_n u$ denotes the outer normal derivative of
$u$ on $\Gamma_1$, $g$  is a given function and $\alpha, \beta, \lambda$ are suitable non-negative constants.  
%

Dynamic boundary conditions have been recently considered by physicists to model the fluid interactions with the domain's walls (see, e.g., \cite{dyn1,dyn2,dyn3}). Despite the practical relevance of this kind of boundary conditions from a modeling point of view and the intense research activity  to understand their analytical properties, see, e.g., \cite{Hintermann:1989, Vazquez-Vitillaro:2009,Vazquez-Vitillaro:2011}, the study of suitable numerical methods for their discretization is still in its infancy. To the best of our knowledge, the only work along this direction is \cite{PierreCHDYN}, where the authors analyze a conforming finite element method for the approximation of the Cahn-Hilliard equation supplemented with dynamic boundary conditions. 
Motivated by the flexibility and versatility of DG methods, here we propose and  analyze a DG method combined with a backward Euler time advancing scheme
for the discretization of a linear parabolic problem with dynamic boundary conditions. 
The main goal of the present work is the numerical treatment of dynamic boundary conditions within the DG framework. Here we consider just a linear equation. However, our results aim to be a key step towards the extension to (non-linear) partial differential equations with dynamic boundary conditions, as, for example, the Cahn-Hilliard equation. 
In this context, we mention  DG methods have been already proved to be an effective discretization strategy for the Cahn-Hilliard equation as shown in \cite{Kay-Styles-Suli:2009} where the authors constructed and analyzed a DG method coupled with a backward Euler time-stepping scheme for a Cahn-Hilliard equation in two-dimensions, cf. also \cite{Wells-Kuhl-Garikipati:2006}.\\

The origins of DG methods can be backtracked to \cite{ReedHill1973, LasaintRaviart_1974} where they have introduced for the discretization of the neutron transport equation. Since that time, DG methods for the numerical solution of partial differential equations have enjoyed a great development, see the monographs    \cite{riviere,HesthavenWarburton_2008} for an overview, and \cite{arnold2002unified} for a unified analysis of DG methods for elliptic problems.
In the context of parabolic equations,
DG methods in primal form combined with backward Euler and Crank-Nicholson time advancing techniques have been firstly analyzed in \cite{arnold1982interior,Riviere_Wheeler_2000}, respectively. DG in time methods have also been studied for parabolic partial differential equations, see, for example, \cite{Jamet_1978,ErikssonJohnson_1991,ErikssonJohnson_1995,LarssonThomeeWahlbin_1998} and the reference therein; cf. also \cite{SchtzauSchwab_2000a,SchtzauSchwab_2000b} for the $hp$-version of the DG time-stepping method.\\

The paper is organized as follows. In Section~\ref{s:basic} we introduce some useful notation and the functional setting. Section~\ref{s:stationary} is devoted to the introduction and analysis of a DG method for a suitable auxiliary (stationary) problem. These results will be then employed in Section~\ref{s:parabolic} to design a DG scheme to approximate the linear parabolic problem with boundary conditions and to obtain optimal a priori error estimates for the fully discrete scheme. Finally, in Section~\ref{s:num} we numerically assess the validity of our theoretical analysis.

\section{Notation and functional setting}\label{s:basic}
In this section we introduce some notation and the functional setting.\\

Let $D\subset \mathbb{R}^2$ be an open, bounded, polygonal domain with boundary $\Gamma=\partial D$.
On $D$ we define the standard Sobolev space $H^s(D),\ s=0,1,2,\ldots$ (for $s=0$ we write $L^2(D)$ instead of $H^0(D)$) and endow it with the usual inner scalar product $(\cdot,\cdot)_{H^s(D)}$, 
and its induced norm $\norm{\cdot}_{H^s(D)}$, cf. \cite{Adams}.  We also need the seminorm defined by 
$\seminorm{\cdot}_{H^s(D)}= (\sum_{\vert \alpha \vert = s} \| \partial^\alpha (\cdot)\|_{L^2(\Omega)})^{1/2}$.\\
We next introduce, on $\Gamma$,  
the Laplace-Beltrami operator. We first define the projection matrix $\textbf{P}=\textbf{I}-\textbf{n} \otimes \textbf{n}=(\delta_{ij}-n_in_j)_{i,j=1}^{2}$, where $\textbf{n}$ is the outward unit normal to $D$, $\textbf{a}\otimes \textbf{b}=(a_ib_j)_{ij}$ is the dyadic product, and $\delta_{ij}$ is the Kroneker delta. We define the tangential gradient of a (regular enough) scalar function $u:\Gamma \rightarrow \mathbb{R}$ as $\nabla_\Gamma u = \textbf{P}\nabla u$.
The tangential divergence of a vector-valued function $\textbf{A}:\Gamma\rightarrow \mathbb{R}^2$ is defined as $\text{div}_\Gamma (\textbf{A})=\text{Tr}\big( (\nabla \textbf{A}) \textbf{P}\big)$, being Tr$(\cdot)$ the trace operator. With the above notation, we define the Laplace-Beltrami operator as $\Delta_\Gamma u = \text{div}_\Gamma(\nabla_\Gamma u)$.\\

We next introduce the following Sobolev surface space
\begin{equation*}
\begin{aligned}
& H^s(\Gamma)=\{v \in H^{s-1}(\Gamma)\ |\ \nabla_\Gamma v \in [H^{s-1}(\Gamma)]^2\}, 
&& s\geq 1,
\end{aligned}
\end{equation*}
cf. \cite{dziuk1988finite}, with the convention that $H^0(\Gamma)\equiv L^2(\Gamma)$, $L^2(\Gamma)$ being the standard Sobolev space of square integrable functions (equipped with the usual inner scalar product $(\cdot,\cdot)_{\Gamma}$ and the usual induced norm $\norm{\cdot}_{L^2(\Gamma)}$).
We equipped the space $H^s(\Gamma)$ with the following surface seminorm
and norm
\begin{equation*}
\begin{aligned}
\seminorm{v}_{H^s(\Gamma)}&= \norm{\nabla_\Gamma v}_{H^{s-1}(\Gamma)}  
&& \forall v \in H^s(\Gamma),\ s\geq 1,\\
\norm{v}_{H^s(\Gamma)}&=\sqrt{\norm{v}^2_{H^{s-1}(\Gamma)}+\seminorm{v}^2_{H^s(\Gamma)}} 
&& \forall v \in H^s(\Gamma),\ s\geq 1,\\
\end{aligned}
\end{equation*}
respectively.
In \cite[Lemma 2.4]{Kashiwabara2014} is proved that the above norm is equivalent to the usual surface norm present in literature \cite{MR0350177}, which is defined in local coordinates after a truncation by a partition of unity. \\

Next, for a positive constant $\lambda$, we introduce the space 
\begin{equation*}
\begin{aligned}
H^s_\lambda(D,\Gamma)=\{v\in H^s(D) : \lambda v_{|\Gamma}\in H^s(\Gamma)\},\qquad s\geq 0,
\end{aligned}
\end{equation*}
 and endow it with the norm
\begin{equation*}\norm{u}_{H^s_\lambda(D,\Gamma)}=\sqrt{\left( \norm{u}^2_{H^s(D)} + \lambda \norm{u_{|\Gamma}}^2_{H^s(\Gamma)}\right)}.
\end{equation*}
As before, for $s=0$ we will write $H^s_\lambda(D,\Gamma)$ instead of $H^0_\lambda(D,\Gamma)$. Moreover, to ease the notation, when $\lambda=1$, we will omit the subscript.\\

Finally, throughout the paper, we will write $x \lesssim y$ to signify $x\leq Cy$, where $C$ is a generic positive constant whose value, possibly different at any occurrence, does not depend on the discretization parameters.

\section{The stationary problem and its DG discretization}\label{s:stationary}
Let $\Omega=(a,b)\times(c,d)\subset \mathbb{R}^2$ be a rectangular domain and let $\Gamma_1, \Gamma_2$ be the union of the top and bottom/left and right edges, respectively, cf. Figure~\ref{identification}. 
We consider the following Laplace problem with generalized Robin boundary conditions:
\begin{equation}
\label{mainpb}\left\{ \begin{array}{rll}
-\Delta u &=f, &\text{ in }\Omega,\\
\partial_n u &= - \alpha u + \beta \Delta_{\Gamma}u+g,&\text{ on }\Gamma_1, \\
\multicolumn{2}{l}{\text{periodic boundary conditions,}}&\text{ on }\Gamma_2,\\
\end{array}\right.
\end{equation}
where $\alpha, \beta$ are positive constants, and $f\in {L^2(\Omega)}$, $g\in L^2(\Gamma_1)$ are given functions.

Defining the bilinear form $a(u,v):H^1(\Omega,\Gamma_1) \times H^1(\Omega,\Gamma_1) \to \mathbb{R}$ as $$a(u,v)= (\nabla u, \nabla v)_{L^2(\Omega)} + \beta (\nabla_\Gamma u, \nabla_\Gamma v)_{L^2(\Gamma_1)} + \alpha 
(u, v)_{L^2(\Gamma_1)},$$
the weak formulation of (\ref{mainpb}) reads:  find $u \in H^1(\Omega,\Gamma_1)$ such that
\begin{equation}\label{weakpb}
a(u,v)= ( f,v )_{L^2(\Omega)}+(g,v)_{L^2(\Gamma_1)} \qquad \forall v \in H^1(\Omega,\Gamma_1).
\end{equation}
The following result shows that formulation (\ref{weakpb}) is well posed.
\begin{theorem}\label{th_principe}
Problem (\ref{weakpb}) admits a unique solution $u\in H^2(\Omega,\Gamma_1)$ satisfying the following stability bound:
\begin{equation}\label{estimate_stab}
\norm{u}_{H^2(\Omega,\Gamma_1)} \lesssim \norm{f}_{L^2(\Omega)} + \norm{g}_{L^2(\Gamma_1)}.
\end{equation}
Moreover, if $f\in H^{s-2}(\Omega)$ and $g\in H^{s-2}(\Gamma_1)$, $s\geq 2$, then $u\in H^s(\Omega,\Gamma_1)$ and
\begin{equation}\label{estimate_stab2}
\norm{u}_{H^s(\Omega,\Gamma_1)} \lesssim \norm{f}_{H^{s-2}(\Omega)} + \norm{g}_{H^{s-2}(\Gamma_1)}.
\end{equation}
\end{theorem}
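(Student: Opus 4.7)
The plan is to proceed in three stages: (i) establish existence, uniqueness and an $H^1$-type stability bound via the Lax--Milgram lemma; (ii) promote this to $H^2(\Omega,\Gamma_1)$-regularity via a tangential difference-quotient argument; (iii) obtain higher $H^s$-regularity by a bootstrap.

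For (i), I would first verify that $a(\cdot,\cdot)$ is continuous on $H^1(\Omega,\Gamma_1)\times H^1(\Omega,\Gamma_1)$, which is immediate from Cauchy--Schwarz together with the definition of the norm on $H^1(\Omega,\Gamma_1)$. Coercivity is the key step: a Poincar\'e-type inequality for functions on $\Omega$ with non-trivial $L^2$-trace on $\Gamma_1$ (exploiting $\alpha>0$) gives $\norm{\nabla u}_{L^2(\Omega)}^2+\alpha\norm{u}_{L^2(\Gamma_1)}^2\gtrsim\norm{u}_{H^1(\Omega)}^2$, while the surface terms $\beta\norm{\nabla_\Gamma u}_{L^2(\Gamma_1)}^2+\alpha\norm{u}_{L^2(\Gamma_1)}^2$ directly control $\norm{u_{|\Gamma_1}}_{H^1(\Gamma_1)}^2$; these combine to bound $a(u,u)$ from below by $\norm{u}_{H^1(\Omega,\Gamma_1)}^2$. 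Continuity of the right-hand side follows from Cauchy--Schwarz and the trace theorem. Lax--Milgram then yields the unique $u$ with $\norm{u}_{H^1(\Omega,\Gamma_1)}\lesssim\norm{f}_{L^2(\Omega)}+\norm{g}_{L^2(\Gamma_1)}$.

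For (ii), the crucial geometric observation is that on the rectangle $\Omega=(a,b)\times(c,d)$ with periodicity on $\Gamma_2$ (the vertical edges), the tangential variable along $\Gamma_1$ (the horizontal edges) is a genuine periodic coordinate, so the tangential difference quotient $d_s u(x,y):=(u(x+s,y)-u(x,y))/s$ and the test function $v=-d_{-s}d_s u$ are admissible. Standard Nirenberg-type manipulations applied to the weak formulation produce the $s$-uniform bound
\[
\norm{d_s \nabla u}_{L^2(\Omega)}^2 + \beta\norm{d_s \nabla_\Gamma u}_{L^2(\Gamma_1)}^2 + \alpha\norm{d_s u}_{L^2(\Gamma_1)}^2 \lesssim \norm{f}_{L^2(\Omega)}^2+\norm{g}_{L^2(\Gamma_1)}^2,
\]
so that $\partial_x u\in H^1(\Omega)$ and $\partial_x u_{|\Gamma_1}\in H^1(\Gamma_1)$. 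The interior equation $-\Delta u = f$, recovered distributionally from the weak formulation, then yields $\partial_{yy}u=-\partial_{xx}u-f\in L^2(\Omega)$, hence $u\in H^2(\Omega)$. The resulting trace $\partial_n u\in H^{1/2}(\Gamma_1)\subset L^2(\Gamma_1)$, substituted into the boundary equation $\beta\Delta_\Gamma u=\partial_n u+\alpha u-g$, gives $\Delta_\Gamma u\in L^2(\Gamma_1)$, whence $u_{|\Gamma_1}\in H^2(\Gamma_1)$ and \eqref{estimate_stab} follows.

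For (iii), I would iterate the previous step by induction on $s\geq 2$: assuming the result through order $s-1$, reapplying the tangential difference-quotient estimate to higher tangential derivatives upgrades one order of tangential regularity in both $\Omega$ and $\Gamma_1$; the bulk PDE restores the missing normal derivative in $\Omega$; and the surface equation $\beta\Delta_\Gamma u=\partial_n u+\alpha u-g$ converts the freshly-gained normal-trace regularity from the bulk into an additional order of surface regularity. The main obstacle I anticipate is keeping the two couplings (bulk--surface and tangential--normal) consistent at each level of the bootstrap, since the right-hand side of the surface equation must carry exactly the required $H^{s-2}(\Gamma_1)$-regularity for the induction to close; periodicity on $\Gamma_2$ ensures $\Gamma_1$ has no boundary of its own, so no extra corner analysis is needed at the vertices of the rectangle, which would otherwise be the most delicate point.
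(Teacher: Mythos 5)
Your proposal is correct, but it takes a genuinely different route from the paper, whose entire proof is a citation: existence and uniqueness are taken from \cite[Theorem 3.2]{Kashiwabara2014}, the regularity estimates from \cite[Theorems 3.3--3.4]{Kashiwabara2014}, with the one-line remark that those arguments carry over ``thanks to periodic conditions.'' You instead give a self-contained argument, and its structure is sound: Lax--Milgram, with coercivity via the Poincar\'e-type inequality $\norm{u}_{L^2(\Omega)}^2 \lesssim \norm{\nabla u}_{L^2(\Omega)}^2 + \norm{u}_{L^2(\Gamma_1)}^2$ (this is where $\alpha>0$ enters, while $\beta>0$ supplies the surface seminorm in the energy); Nirenberg difference quotients in the tangential variable, which are globally defined because $x$-translations are isometries of the periodically identified strip; recovery of $\partial_{yy}u$ from the distributional bulk equation $-\Delta u = f$ and of $\Delta_\Gamma u$ from the boundary equation; and a bootstrap in which $\partial_x u$ solves the same boundary value problem with data $(\partial_x f, \partial_x g)$, legitimate since $\partial_x$ commutes with $\partial_n$ and $\Delta_\Gamma$ on the flat $\Gamma_1$. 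What your route buys is precisely what the paper's remark gestures at without proof: on this product geometry no localization or boundary flattening is needed, and the periodic identification turns each component of $\Gamma_1$ into a closed curve, so $\Delta_\Gamma$ acts on a compact one-dimensional manifold without boundary and no corner compatibility analysis arises --- you correctly identify this as the reason the general theory simplifies. What the citation buys is generality (the arguments of \cite{Kashiwabara2014} cover smooth curved domains, where translations must be replaced by chart-wise tangential quotients) and brevity. Two points you should make explicit in a full write-up: the trial/test space $H^1(\Omega,\Gamma_1)$ must be understood to encode the periodicity on $\Gamma_2$, otherwise $v=-d_{-s}d_s u$ is not an admissible test function; and in the difference-quotient estimate the data terms are absorbed using $\norm{d_{-s}d_s u}_{L^2(\Omega)} \lesssim \norm{d_s \nabla u}_{L^2(\Omega)}$ together with its surface analogue on $\Gamma_1$, which again relies on periodicity to avoid boundary terms in the discrete integration by parts.
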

\begin{proof}
The existence and uniqueness of the solution are proved in \cite[Theorem 3.2]{Kashiwabara2014}.
The proof of the regularity results is shown in \cite[Theorem 3.3-3.4]{Kashiwabara2014}. The same arguments used in \cite[Theorem 3.3-3.4]{Kashiwabara2014} apply also in our case thanks to periodic conditions.
\end{proof}

\begin{remark}\label{smoothassumption}
We observe that the forthcoming analysis holds in more general-shaped domains and/or more general type of boundary conditions provided that the exact solution of the differential problem analogous to (\ref{weakpb}) satisfies a stability bound of the form of (\ref{estimate_stab}).
\end{remark}

\subsection{Discontinuous Galerkin space discretization}\label{dgdiscr}
In this Section~we present a discontinuous Galerkin (DG) approximation of problem (\ref{weakpb}).\\
Let $\mathcal{T}_h$ be a quasi-uniform partition of $\Omega$ into disjoint open triangles $T$ such that $\overline{\Omega}=\cup_{T\in\mathcal{T}_h} \overline{T}$. We set $h=\max\{\text{diam}(T),\ T \in \mathcal{T}_h\}$.
For $s\geq 0$, we define the following broken space $$H^s(\mathcal{T}_h)=\{v\in L^2(\Omega) : v_{|T} \in H^s(T,\partial T),\ T\in\mathcal{T}_h \},$$ where, as before, $H^0(\mathcal{T}_h)=L^2(\mathcal{T}_h)$. For an integer $p\geq 1$, we also define the finite dimensional space $$V^p(\mathcal{T}_h)=\{v\in L^2(\Omega) : v_{|T} \in \mathbb{P}^p(T),\ T\in\mathcal{T}_h \}\subset H^s(\mathcal{T}_h),$$
for any $s \geq 0$.
An interior edge $e$ is defined as the non-empty intersection of the closure of two neighboring elements, i.e., $\overline{e}=\overline{T_1}\cap\overline{T_2}$, for $T_1, T_2 \in \mathcal{T}_h$. We collect all the interior edges in the set $\mathcal{E}_h^0$.
Recalling that on $\Gamma_2\subset \partial\Omega$ we impose periodic boundary conditions, we decompose $\Gamma_2$ as $\Gamma_2=\Gamma_2^+\cup \Gamma_2^-$, cf. Figure~\ref{identification} (left), and identify $\Gamma_2^+$ with $\Gamma_2^-$, cf. Figure~\ref{identification} (right). Then we define the set $\mathcal{E}_h^{\Gamma_2}$ of the periodic boundary edges as follows. An edge $e\in\mathcal{E}_h^{\Gamma_2}$ if $\overline{e}=\partial \overline{T}^- \cap \partial \overline{T}^+$, where $T^{\pm}\in \mathcal{T}_h$ such that $\partial T^{\pm}\subseteq \Gamma_2^{\pm}$, cf. Figure~\ref{identification} (right).
We also define a boundary edge $e_{\Gamma_1}$ as the non-empty intersection between the closure of an element in $\mathcal{T}_h$ and $\Gamma_1$ and the set of those edges by $\mathcal{E}_h^{\Gamma_1}$.
Finally, we define a boundary ridge $r$ as the subset of the mesh vertexes that lie on $\Gamma_1$, and collect all the ridges $r$ in the set $\mathcal{R}_h^{\Gamma_1}$. Clearly, the corner ridges have to be identified according to the periodic boundary conditions (cf. Figure~\ref{identification}, right).
The set of all edge will be denoted by $\mathcal{E}_h$, i.e., $\mathcal{E}_h=\mathcal{E}_h^0\cup \mathcal{E}_h^{\Gamma_1}\cup \mathcal{E}_h^{\Gamma_2}$.
\begin{figure}[!hbt]
\centering
\begin{subfigure}[b]{0.35\textwidth}                \includegraphics[height=0.8\textwidth]{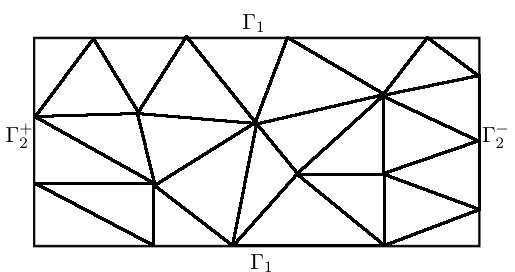}
\end{subfigure}\qquad \qquad \qquad \qquad
\begin{subfigure}[b]{0.25\textwidth}                \includegraphics[height=1.12\textwidth]{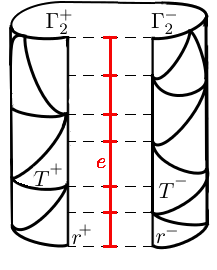}
\end{subfigure}
        \caption{Example of a domain $\Omega$ and an admissible triangulation $\mathcal{T}_h$ (left). On the right, we highlight the edges $e\in \mathcal{E}_h^{\Gamma_2}$ with red lines.}\label{identification}
\end{figure}

For $v\in H^s(\mathcal{T}_h)$, $s\geq 1$, we define
$$\seminorm{v}_{H^s(\mathcal{T}_h)}^2= \sum_{T \in \mathcal{T}_h} \seminorm{v}_{H^s(T)}^2, \qquad
\seminorm{v}_{H^s(\mathcal{E}_h^{\Gamma_1})}^2= \sum_{e_{\Gamma_1} \in \mathcal{E}_h^{\Gamma_1}} \seminorm{v}_{H^s(e_{\Gamma_1})}^2.$$
Next, for each $e\in \mathcal{E}_{h}^0 \cup \mathcal{E}_{h}^{\Gamma_2}$ we define the jumps and the averages of $v\in H^1(\mathcal{T}_h)$ as
$$[v]_e=(v^+) \textbf{n}_{e}^+ + (v^-) \textbf{n}_{e}^- \qquad \text{and} \qquad \{v\}_{e} = \frac{1}{2}(v^++v^-),$$
where $v^\pm=v_{|T^\pm}$ and $\textbf{n}_{e}^\pm$ is the unit normal vector to $e$ pointing outward of $T^\pm$. For each $e\in\mathcal{E}_h^{\Gamma_1}$ we define
$$[v]_{e}=v_{|e}\ \mathbf{n}_{e},\qquad \{v\}_{e}=v_{|e}, \qquad v\in H^1(\mathcal{T}_h).$$
Analogously, for each $r \in \mathcal{R}_h^{\Gamma_1}$, we set
$$[v]_{r}=(v^+(r)) \textbf{n}_{r}^+ + (v^-(r)) \textbf{n}_{r}^- \qquad \text{and} \qquad \{v\}_{r} = \frac{1}{2}(v^+(r)+v^-(r)),$$
where, denoting by $e^\pm$ the two edges sharing the ridge $r$, $v^\pm(r)=v_{|{e^\pm}}(r)$ and $\textbf{n}_{r}^\pm$ is the unit tangent vector to $\Gamma_1$ on $r$ pointing outward of ${e^\pm}$.
The above definitions can be immediately extended to a (regular enough) vector-valued function, cf. \cite{arnold2002unified}.
To simplify the notation, when the meaning will be clear from the context,
we remove the subscripts from the jump and average operators. Adopting the convention that
$$(v,w)_{\mathcal{E}_{h}}=\sum_{e \in \mathcal{E}_{h}} (v,w)_{L^2(e)}, \qquad (\xi,\eta)_{\mathcal{R}_{h}^{\Gamma_1}}=\sum_{r \in \mathcal{R}_{h}^{\Gamma_1}} \xi(r)\eta(r)$$
for regular enough functions $v,w,\xi,\eta$, we introduce the following bilinear forms
\begin{align*}
\mathcal{B}_{h}(v,w)=\sum_{T \in \mathcal{T}_h} (\nabla v ,\nabla w)_T &- ([v],\{ \nabla w \})_{\mathcal{E}_{h}^0} - ([w], \{\nabla v\})_{\mathcal{E}_{h}^0} + \sigma ([v],[w])_{\mathcal{E}_{h}^0}\\
& -  ([v],\{ \nabla w \})_{\mathcal{E}_{h}^{\Gamma_2}} - ([w], \{\nabla v\})_{\mathcal{E}_{h}^{\Gamma_2}} + \sigma ([v],[w])_{\mathcal{E}_{h}^{\Gamma_2}}
\end{align*}
and
$$b_h(v,w)= (\nabla_\Gamma v, \nabla_\Gamma w)_{\mathcal{E}_h^{\Gamma_1}} -( [v],\{ \nabla_\Gamma w \})_{\mathcal{R}_{h}^{\Gamma_1}} - ([w],\{\nabla_\Gamma v\})_{\mathcal{R}_{h}^{\Gamma_1}} + \sigma([v],[w])_{\mathcal{R}_{h}^{\Gamma_1}},$$
for all $v,w \in H^2(\mathcal{T}_h)$. Here $\sigma=\frac{\gamma}{h}$, being $\gamma$ a  positive constant at our disposal.
We then set 
\begin{equation}\label{mcalah}
\mathcal{A}_h(u,v)=\mathcal{B}_{h}(u,v)+\alpha\  (u,v)_{L^2(\Gamma_1)} +\beta\ b_{h}(u,v).
\end{equation}

The discontinuous Galerkin approximation of problem (\ref{mainpb}) reads: find $u_h\in V^p(\mathcal{T}_h)$ such that
\begin{equation}\label{dgpb}
\mathcal{A}_h(u_h,v_h)= ( f,v_h )_{L^2(\Omega)}+(g,v_h)_{L^2(\Gamma_1)} \qquad \forall v_h \in V^p(\mathcal{T}_h).
\end{equation}
In the following we show that the bilinear form $\mathcal{A}_h(\cdot,\cdot)$ is continuous and coercive in a suitable (mesh-dependent) energy norm. To this aim, for $w\in H^s(\mathcal{T}_h)$, we define the seminorm
\begin{equation*}
\bnorm{w}^2=\seminorm{w}_{H^1(\mathcal{T}_h)}^{2}+\sigma \norm{[w]}_{L^2(\mathcal{E}_h^0\cup \mathcal{E}_h^{\Gamma_2})}^{2} + \frac{1}{\sigma}\norm{\{\nabla w\}}_{L^2(\mathcal{E}_h^0\cup \mathcal{E}_h^{\Gamma_2})}^{2}
\end{equation*}
and the norm 
\begin{multline}\label{norm_star}
\snorm{w}^2= \bnorm{w}^2+\alpha \norm{w}_{L^2(\Gamma_1)}^{2}\\
+\beta \seminorm{w}_{H^1(\mathcal{E}_h^{\Gamma_1})}^{2}+
\beta\sigma \norm{[w]}_{L^2(\mathcal{R}_h^{\Gamma_1})}^{2} + \frac{\beta}{\sigma}\norm{\{ \nabla_\Gamma w\}}_{L^2(\mathcal{R}_h^{\Gamma_1})}^{2},
\end{multline}
where we adopted the notation $$\norm{w}^2_{L^2(\mathcal{E}_{h})}=\sum_{e \in \mathcal{E}_{h}} \norm{w}^2_{L^2(e)}, \qquad  \norm{w}^2_{L^2(\mathcal{R}_{h}^{\Gamma_1})}=\sum_{e \in \mathcal{R}_{h}^{\Gamma_1}} \norm{w}^2_{L^2(r)}.$$
Reasoning as in \cite{arnold1982interior}, it is easy to prove the following result.
\begin{lemma}
It holds
\begin{equation}\label{continuity_a}
\mathcal{A}_h(v,w)\lesssim \snorm{v}\snorm{w} \qquad \forall v,w\in H^2(\mathcal{T}_h).
\end{equation}
Moreover, for $\gamma$ large enough, it holds
\begin{equation}\label{coercivity_a}
\snorm{v}^{2}\lesssim \mathcal{A}_h(v,v) \qquad \forall v\in V^p(\mathcal{T}_h).
\end{equation}
\end{lemma}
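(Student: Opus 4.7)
The plan is to prove both \eqref{continuity_a} and \eqref{coercivity_a} by the standard SIPG-type machinery, applied in parallel to the bulk form $\mathcal{B}_h$ and to its one-dimension-lower analogue $b_h$ (in which interior edges $e\in\mathcal{E}_h^0$ are replaced by ridges $r\in\mathcal{R}_h^{\Gamma_1}$ and the normal gradient by the tangential one). The $\alpha$-mass contribution on $\Gamma_1$ is trivial.

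For continuity, the idea is to bound each summand of $\mathcal{A}_h(v,w)$ by Cauchy--Schwarz after splitting the penalty parameter symmetrically. For the bulk consistency terms,
\[
\bigl|([v],\{\nabla w\})_{\mathcal{E}_h^0\cup\mathcal{E}_h^{\Gamma_2}}\bigr|
\le \bigl(\sigma\norm{[v]}_{L^2(\mathcal{E}_h^0\cup\mathcal{E}_h^{\Gamma_2})}^2\bigr)^{1/2}
\bigl(\sigma^{-1}\norm{\{\nabla w\}}_{L^2(\mathcal{E}_h^0\cup\mathcal{E}_h^{\Gamma_2})}^2\bigr)^{1/2}
\le \bnorm{v}\,\bnorm{w};
\]
the penalty term $\sigma([v],[w])$ splits as $(\sqrt\sigma[v])(\sqrt\sigma[w])$ and is bounded identically; the ridge terms of $b_h$ are treated in the same way with weights $\sqrt{\beta\sigma}$ and $\sqrt{\beta/\sigma}$ matching the corresponding components of $\snorm{\cdot}$. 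Summing the resulting Cauchy--Schwarz estimates yields \eqref{continuity_a}.

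For coercivity, testing with $w=v\in V^p(\mathcal{T}_h)$ gives
\begin{align*}
\mathcal{A}_h(v,v) = &\ \seminorm{v}_{H^1(\mathcal{T}_h)}^2 + \sigma\norm{[v]}_{L^2(\mathcal{E}_h^0\cup\mathcal{E}_h^{\Gamma_2})}^2 - 2([v],\{\nabla v\})_{\mathcal{E}_h^0\cup\mathcal{E}_h^{\Gamma_2}} \\
&+ \alpha\norm{v}_{L^2(\Gamma_1)}^2 + \beta\seminorm{v}_{H^1(\mathcal{E}_h^{\Gamma_1})}^2 + \beta\sigma\norm{[v]}_{L^2(\mathcal{R}_h^{\Gamma_1})}^2 - 2\beta([v],\{\nabla_\Gamma v\})_{\mathcal{R}_h^{\Gamma_1}}.
\end{align*}
The two cross terms are absorbed via Young's inequality together with the discrete inverse trace estimates
\[
\norm{\{\nabla v\}}_{L^2(\mathcal{E}_h^0\cup\mathcal{E}_h^{\Gamma_2})}^2 \lesssim h^{-1}\seminorm{v}_{H^1(\mathcal{T}_h)}^2, \qquad
\norm{\{\nabla_\Gamma v\}}_{L^2(\mathcal{R}_h^{\Gamma_1})}^2 \lesssim h^{-1}\seminorm{v}_{H^1(\mathcal{E}_h^{\Gamma_1})}^2,
\]
the latter being the one-dimensional inverse trace inequality for polynomials restricted to edges of $\mathcal{E}_h^{\Gamma_1}$. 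Recalling $\sigma=\gamma/h$, choosing the Young constant equal to $\tfrac12$ and then $\gamma$ large enough absorbs the cross terms and leaves strictly positive multiples of $\seminorm{v}_{H^1(\mathcal{T}_h)}^2$, $\sigma\norm{[v]}^2$, $\alpha\norm{v}_{L^2(\Gamma_1)}^2$, $\beta\seminorm{v}_{H^1(\mathcal{E}_h^{\Gamma_1})}^2$ and $\beta\sigma\norm{[v]}_{L^2(\mathcal{R}_h^{\Gamma_1})}^2$. The two remaining pieces of $\snorm{v}^2$, namely $\sigma^{-1}\norm{\{\nabla v\}}^2$ and $\beta\sigma^{-1}\norm{\{\nabla_\Gamma v\}}^2$, are recovered from the same inverse estimates: they obey $\sigma^{-1}\norm{\{\nabla v\}}^2\lesssim\gamma^{-1}\seminorm{v}_{H^1(\mathcal{T}_h)}^2$ and analogously on $\Gamma_1$, hence for $\gamma$ large they are dominated by the gradient terms already under control.

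The only mildly technical point is the lower-dimensional argument for $b_h$: one needs the one-dimensional inverse trace estimate at the ridge points of $\Gamma_1$ and a careful accounting of the periodic identifications at the corners, where pairs of ridges on $\Gamma_2^\pm$ are merged into a single ridge. Once those ingredients are in place, the surface argument is a verbatim copy of the classical interior penalty estimates of \cite{arnold1982interior}.
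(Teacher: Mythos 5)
Your proposal is correct and follows essentially the same route as the paper's own proof: weighted Cauchy--Schwarz for continuity, and for coercivity the arithmetic-geometric (Young) inequality combined with the polynomial inverse estimates $\norm{\{\nabla v\}}_{L^2(\mathcal{E}_h^0\cup\mathcal{E}_h^{\Gamma_2})}^2 \lesssim h^{-1}\seminorm{v}_{H^1(\mathcal{T}_h)}^2$ and $h\,\norm{\{\nabla_\Gamma v\}}_{L^2(\mathcal{R}_h^{\Gamma_1})}^2 \lesssim \seminorm{v}_{H^1(\mathcal{E}_h^{\Gamma_1})}^2$, taking $\gamma$ sufficiently large. The only cosmetic difference is that you spell out the recovery of the average-gradient components of $\snorm{\cdot}$, which the paper subsumes in the bound $\bnorm{v}^2 \lesssim \seminorm{v}_{H^1(\mathcal{T}_h)}^2 + \sigma\norm{[v]}^2_{L^2(\mathcal{E}_h^0\cup\mathcal{E}_h^{\Gamma_2})} \lesssim \mathcal{B}_h(v,v)$.
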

\begin{proof}
Let us first prove \eqref{continuity_a}. The term $\mathcal{B}_{h}(\cdot,\cdot)$ can be bounded by Cauchy-Schwarz inequality as in \cite{arnold1982interior}. Also the term $b_{h}(\cdot,\cdot)$ can be handled using the Cauchy-Schwarz inequality:
\begin{align*}
| b_{h}(v,w) |&= \left|(\nabla_\Gamma v, \nabla_\Gamma w)_{\mathcal{E}_h^{\Gamma_1}}  - ( [v],\{ \nabla_\Gamma w \})_{\mathcal{R}_h^{\Gamma_1}}\right.\\
& \left.\qquad \qquad \qquad \qquad \qquad \qquad - ([w],\{\nabla_\Gamma v\})_{\mathcal{R}_h^{\Gamma_1}} + \sigma ([v],[w])_{\mathcal{R}_h^{\Gamma_1}}\right|\\
&\lesssim \Big( \seminorm{v}_{H^1(\mathcal{E}_h^{\Gamma_1})}^{2}+\sigma \norm{[v]}_{L^2(\mathcal{R}_h^{\Gamma_1})}^{2} + \frac{1}{\sigma}\norm{\{ \nabla_\Gamma v\}}_{L^2(\mathcal{R}_h^{\Gamma_1})}^{2}\Big)^{1/2}\times \\
&\qquad \qquad \Big( \seminorm{w}_{H^1(\mathcal{E}_h^{\Gamma_1})}^{2}+\sigma \norm{[w]}_{L^2(\mathcal{R}_h^{\Gamma_1})}^{2} + \frac{1}{\sigma}\norm{\{ \nabla_\Gamma w\}}_{L^2(\mathcal{R}_h^{\Gamma_1})}^{2}\Big)^{1/2},
\end{align*}
and (\ref{continuity_a}) follows employing the definition (\ref{norm_star}) of the norm $\snorm{\cdot}$.\\
We now prove \eqref{coercivity_a}. As before the term $\mathcal{B}_{h}(\cdot,\cdot)$ can be bounded as in \cite{arnold1982interior}: using the classical polynomial inverse inequality \cite{Ciarlet2002} we obtain
$$\bnorm{v}^2\lesssim \seminorm{v}_{H^1(\mathcal{T}_h)}^2 + \sigma \norm{[v]}^2_{L^2(\mathcal{E}^0_h\cup \mathcal{E}^{\Gamma_2}_h)}\lesssim \mathcal{B}_{h}(v,v)$$
for all $v\in V^p(\mathcal{T}_h).$
The term $b_{h}(\cdot,\cdot)$ can be estimated as follows:
\begin{equation*}
b_{h}(v,v)\geq \seminorm{v}_{H^1(\mathcal{E}_h^{\Gamma_1})}^2 - 2\left|( [v],\{ \nabla_\Gamma v \})_{\mathcal{R}_h^{\Gamma_1}}\right| + \sigma \norm{[v]}^2_{L^2(\mathcal{R}_h^{\Gamma_1})}.
\end{equation*}
Employing the arithmetic-geometric inequality we get:
\begin{align*}
\left|( [v],\{ \nabla_\Gamma v \})_{\mathcal{R}_h^{\Gamma_1}}\right| &\leq \norm{\sigma^{1/2}[v]}_{L^2(\mathcal{R}_h^{\Gamma_1})}\norm{\{ \sigma^{-1/2} \nabla_\Gamma v \}}_{L^2(\mathcal{R}_h^{\Gamma_1})}\\
&\leq \frac{1}{\epsilon} \sigma\norm{[v]}_{L^2(\mathcal{R}_h^{\Gamma_1})}^2 + 4 \epsilon \sigma^{-1}\norm{\{ \nabla_\Gamma v \}}_{L^2(\mathcal{R}_h^{\Gamma_1})}^2,
\end{align*}
for a positive $\epsilon>0$. Finally, estimate (\ref{coercivity_a}) follows using the polynomial inverse inequality
$$h \norm{ \{ \nabla_\Gamma v \}}_{L^2(\mathcal{R}_h^{\Gamma_1})}^2\lesssim \seminorm{v}_{H^1(\mathcal{E}_h^{\Gamma_1})}^2 \qquad \forall v\in V^p(\mathcal{T}_h)$$
and choosing $\gamma$ sufficiently large.
\end{proof}

The following result shows that problem (\ref{dgpb}) admits a unique solution and that the Galerkin orthogonality property is satisfied. The proof is straightforward and we omit it for sake of brevity.

\begin{lemma} Assume that $\gamma$ is sufficiently large.
Then, the discrete solution $u_h$ of problem (\ref{dgpb}) exists and is unique. Moreover, formulation (\ref{dgpb}) is strongly consistent, i.e., 
\begin{equation}\label{Go}
\mathcal{A}_h(u-u_h,v)=0\qquad \forall v\in V^p(\mathcal{T}_h).
\end{equation}
\end{lemma}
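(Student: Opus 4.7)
The plan is to dispose of existence and uniqueness via the coercivity of $\mathcal{A}_h$ established in \eqref{coercivity_a}, and then deduce Galerkin orthogonality from a strong consistency check performed by elementwise integration by parts. Existence and uniqueness reduce to uniqueness because \eqref{dgpb} is a square linear system on the finite-dimensional space $V^p(\mathcal{T}_h)$. So I would take $u_h\in V^p(\mathcal{T}_h)$ satisfying $\mathcal{A}_h(u_h,v_h)=0$ for all $v_h$, test with $v_h=u_h$, and use \eqref{coercivity_a} (valid for $\gamma$ large enough) together with the fact that $\snorm{\cdot}$ is a norm on $V^p(\mathcal{T}_h)$ to conclude $u_h\equiv 0$.

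For Galerkin orthogonality the key step is the strong consistency identity
\[
\mathcal{A}_h(u,v_h)=(f,v_h)_{L^2(\Omega)}+(g,v_h)_{L^2(\Gamma_1)}\qquad \forall v_h\in V^p(\mathcal{T}_h),
\]
which, when subtracted from \eqref{dgpb}, immediately yields \eqref{Go}. By Theorem \ref{th_principe} the exact solution $u$ lies in $H^2(\Omega,\Gamma_1)$, which is enough regularity to plug $u$ into $\mathcal{A}_h(\cdot,\cdot)$. I would verify the identity by integrating by parts element-by-element in $\mathcal{B}_h(u,v_h)$ and edge-by-edge along $\Gamma_1$ in $\beta b_h(u,v_h)$. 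The regularity of $u$ ensures that it is continuous across every interior edge, periodic across $\Gamma_2$-edges, and (since $u|_{\Gamma_1}\in H^2(\Gamma_1)$) single-valued at the ridges of $\mathcal{R}_h^{\Gamma_1}$, so all jump terms involving $[u]$ and all symmetrization terms pairing $[u]$ against $\{\nabla v_h\}$ or $\{\nabla_\Gamma v_h\}$ vanish identically. What survives is $(-\Delta u,v_h)_{L^2(\Omega)}+(\partial_n u,v_h)_{L^2(\Gamma_1)}-\beta(\Delta_\Gamma u,v_h)_{L^2(\Gamma_1)}+\alpha(u,v_h)_{L^2(\Gamma_1)}$, and inserting the strong form $-\Delta u=f$ in $\Omega$ together with $\partial_n u=-\alpha u+\beta\Delta_\Gamma u+g$ on $\Gamma_1$ collapses everything to the right-hand side of the identity.

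The main bookkeeping obstacle is the integration by parts along the one-dimensional manifold $\Gamma_1$: one has to track the ridge contributions in $\mathcal{R}_h^{\Gamma_1}$, including the corner ridges identified via periodicity, and verify that the boundary terms generated by integrating $\beta\Delta_\Gamma u$ against $v_h$ precisely match the structure of $b_h$, using $[u]_r=0$ and the single-valuedness of $\nabla_\Gamma u$ at each $r$. Once this accounting is done carefully, the remainder of the argument is a routine DG manipulation in the spirit of \cite{arnold1982interior}.
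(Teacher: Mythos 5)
Your proposal is correct: coercivity \eqref{coercivity_a} (with $\snorm{\cdot}$ a norm on $V^p(\mathcal{T}_h)$, since a piecewise polynomial with vanishing broken gradient and vanishing jumps is a global constant, killed by the $\alpha\norm{\cdot}_{L^2(\Gamma_1)}$ term) gives uniqueness and hence existence for the square finite-dimensional system, and the elementwise/edgewise integration by parts, using $[u]=0$ on $\mathcal{E}_h^0\cup\mathcal{E}_h^{\Gamma_2}$ and $[u]_r=0$ with single-valued $\nabla_\Gamma u$ at the ridges (legitimate since $u_{|\Gamma_1}\in H^2(\Gamma_1)\hookrightarrow C^1$ in one dimension), yields the strong consistency identity from which \eqref{Go} follows by subtraction. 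This is precisely the routine argument the paper declares straightforward and omits, so there is nothing to compare beyond noting your account fills in the omitted details faithfully.
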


For $v\in H^{s}(\Omega,\Gamma_1)$, $s\geq 2$, let $I^h_p v$ be the piecewise Lagrangian interpolant of order $p$ of $u$ on $\mathcal{T}_h$. Note that $(I^h_p u)_{|\Gamma_1}$ interpolates $u$ on the set of degrees of freedom that lie on $\mathcal{E}_h^{\Gamma_1}$.
By standard approximation results we get the following interpolation estimate.

\begin{lemma}\label{S_IntEstimates}
For all $v \in H^s(\Omega,\Gamma_1)$, $s\geq 2$, it holds
$$\snorm{v-I^h_p v} \lesssim h^{\min{(s-1,p)}} \norm{v}_{H^s(\Omega,\Gamma_1)}.$$
\end{lemma}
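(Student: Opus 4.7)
The plan is to set $\eta = v - I^h_p v$ and decompose $\snorm{\eta}^2$, using the definition \eqref{norm_star}, into three groups of contributions that I would bound separately, each by $h^{2\min(s-1,p)}\|v\|_{H^s(\Omega,\Gamma_1)}^2$: (i) the interior and periodic-edge terms collected in $\bnorm{\eta}^2$; (ii) the surface-edge terms $\alpha\|\eta\|_{L^2(\Gamma_1)}^2$ and $\beta|\eta|_{H^1(\mathcal{E}_h^{\Gamma_1})}^2$; and (iii) the ridge point-evaluation terms $\beta\sigma\|[\eta]\|^2_{L^2(\mathcal{R}_h^{\Gamma_1})}$ and $(\beta/\sigma)\|\{\nabla_\Gamma\eta\}\|^2_{L^2(\mathcal{R}_h^{\Gamma_1})}$. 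The claim then follows by summing and taking square roots.

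Groups (i) and (ii) are essentially standard. For (i) I would quote the classical DG interpolation estimate of \cite{arnold1982interior}: element-wise Bramble-Hilbert for the Lagrange interpolant, combined with the scaled trace inequality $\|w\|^2_{L^2(\partial T)} \lesssim h^{-1}\|w\|^2_{L^2(T)} + h\|\nabla w\|^2_{L^2(T)}$, gives $\bnorm{\eta}^2 \lesssim h^{2\min(s-1,p)}\|v\|^2_{H^s(\Omega)}$. For (ii), since $v_{|\Gamma_1} \in H^s(\Gamma_1)$ and by construction of $I^h_p$ the restriction $(I^h_p v)_{|e}$ is the one-dimensional Lagrange interpolant of $v_{|e}$ on each $e\in\mathcal{E}_h^{\Gamma_1}$, a scaling argument from the reference edge of unit length yields
$$\|\eta\|^2_{L^2(e)} + h^{2}|\eta|^2_{H^1(e)} \lesssim h^{2\min(s,p+1)}\|v\|^2_{H^s(\omega_e)},$$
and summation over $\mathcal{E}_h^{\Gamma_1}$ controls both the $\alpha$- and $\beta$-weighted surface-edge terms.

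The delicate part is group (iii). I would use the one-dimensional scaled trace inequality on edges $e \in \mathcal{E}_h^{\Gamma_1}$ of length $\lesssim h$,
$$|\phi(r)|^2 \lesssim h^{-1}\|\phi\|^2_{L^2(e)} + h\,\|\phi'\|^2_{L^2(e)} \qquad \forall\, r \in \partial e,$$
first with $\phi = \eta$ (to handle the jump $[\eta]$) and then with $\phi = \nabla_\Gamma\eta$ (to handle the average $\{\nabla_\Gamma\eta\}$, which is meaningful thanks to the $H^s(\Gamma_1)$-regularity of $v_{|\Gamma_1}$ with $s\geq 2$). After multiplication by $\sigma=\gamma/h$ or by $1/\sigma$, the $h^{\pm 1}$ factors produced by the trace inequality cancel against the $\sigma$-weights in $\snorm{\cdot}$, and the right-hand sides reduce to edgewise $L^2$- and $H^1$-norms of $\eta$, plus a harmless $h^2|\eta|^2_{H^2(e)}$ contribution, all of which are controlled by the estimates already produced in group (ii) together with Bramble-Hilbert on the reference edge.

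The main obstacle I expect is precisely this bookkeeping in group (iii): one must verify that the $\sigma=\gamma/h$ weights inside $\snorm{\cdot}$ are exactly matched to the $h^{\pm 1}$ factors coming from the one-dimensional trace inequality, and that the appearance of a second tangential derivative of $\eta$ in the estimate of the average term is consistent with the assumed regularity $H^s(\Gamma_1)$, $s\geq 2$. The remaining computations amount to routine scaling and Bramble-Hilbert on the reference element, so no further essential difficulty is expected.
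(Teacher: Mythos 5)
Your proof is correct, and on your groups (i) and (ii) it coincides with the paper's argument: the paper likewise bounds $\bnorm{v-I^h_p v}$ via the multiplicative trace inequalities of \cite{riviere} followed by standard interpolation estimates \cite{PerugiaSchotzau}, and handles the $\alpha$- and $\beta$-weighted $\Gamma_1$-terms by one-dimensional interpolation on the boundary edges, using exactly the fact you invoke, namely that $(I^h_p v)_{|\Gamma_1}$ interpolates $v$ at the degrees of freedom on $\mathcal{E}_h^{\Gamma_1}$. The genuine divergence is in your group (iii). The paper's key device is nodal exactness at the ridges, $I^h_p v(r)=v(r)$ for all $r\in\mathcal{R}_h^{\Gamma_1}$, which makes the jump contribution $\beta\sigma\norm{[v-I^h_p v]}^2_{L^2(\mathcal{R}_h^{\Gamma_1})}$ vanish identically, so it is never estimated; you instead pay for it with the one-dimensional scaled trace inequality, whose $h^{-1}$ factor combines with $\sigma=\gamma/h$ to give $h^{-2}\norm{\eta}^2_{L^2(e)}+\seminorm{\eta}^2_{H^1(e)}\lesssim h^{2\min(s-1,p)}\norm{v}^2_{H^s(\Omega,\Gamma_1)}$, so both routes land on the same rate. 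Notably, your treatment of the average term $\tfrac{\beta}{\sigma}\norm{\{\nabla_\Gamma(v-I^h_p v)\}}^2_{L^2(\mathcal{R}_h^{\Gamma_1})}$ is more careful than the paper's: nodal exactness does \emph{not} annihilate this term (the tangential derivative of the interpolation error is generically nonzero at a ridge), yet the paper's identity \eqref{*norm} silently drops it; your bound $\tfrac{h}{\gamma}\vert\nabla_\Gamma\eta(r)\vert^2\lesssim \seminorm{\eta}^2_{H^1(e)}+h^2\seminorm{\eta}^2_{H^2(e)}\lesssim h^{2\min(s-1,p)}\norm{v}^2_{H^s(\Omega,\Gamma_1)}$, valid for $s\geq 2$, $p\geq 1$, supplies precisely the missing justification. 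In short: same skeleton, but your trace-inequality handling of the ridge terms is more generic (it would work for any interpolant or projector with the stated edgewise rates), while the paper's nodal-exactness shortcut is briefer where it applies but, as written, leaves the gradient-average ridge term unaccounted for.
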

\begin{proof}
Using the definition (\ref{norm_star}) of $\snorm{\cdot}$ norm and that $I_p^h v(r) = v(r)$ for all $r \in \mathcal{R}_h^{\Gamma_1}$, we get
\begin{equation}\label{*norm}
\snorm{v-I^h_p v}^2= \bnorm{v-I^h_p v}^2+\alpha \norm{v-I^h_p v}_{L^2(\Gamma_1)}^{2}+\beta \seminorm{v-I^h_p v}_{H^1(\mathcal{E}_h^{\Gamma_1})}^{2}.
\end{equation}
Expanding the first term at right-hand side and using the multiplicative trace inequalities
\begin{align*}
\norm{v}^2_{L^2(\mathcal{E}_{h})} \lesssim h^{-1} \norm{v}^2_{L^2(\Omega)} + h \seminorm{v}_{H^1(\Omega)}^2,\\
\norm{\nabla v}^2_{L^2(\mathcal{E}_{h})} \lesssim h^{-1} \seminorm{v}^2_{H^1(\Omega)} + h \seminorm{v}_{H^2(\Omega)}^2,
\end{align*}
cf. \cite{riviere}, we get
\begin{align*}
\bnorm{v-I^h_p v}^2&=\seminorm{v-I^h_p v}_{H^1(\Omega)}^{2}+\sigma \norm{[v-I^h_p v]}_{L^2(\mathcal{E}_{h}^0\cup\mathcal{E}_{h}^{\Gamma_2})}^{2} \\
& \qquad\qquad\qquad\qquad\qquad\qquad+ \frac{1}{\sigma}\norm{\{\nabla (v-I^h_p v)\}}_{L^2(\mathcal{E}_{h}^0\cup\mathcal{E}_{h}^{\Gamma_2})}^{2}\\
&\lesssim h^{-2} \norm{v-I^h_p v}^2_{L^2(\Omega)} + \seminorm{v-I^h_p v}_{H^1(\Omega)}^{2}
+ h^2 \seminorm{v-I^h_p v}_{H^2(\Omega)}^2.
\end{align*}
Using standard interpolation estimates \cite{PerugiaSchotzau} we get the thesis.
\end{proof}

Now we show that the discrete solution $u_h$ of (\ref{dgpb}) converges to the weak solution of (\ref{weakpb}).

\begin{theorem}\label{StatTh}
Let $u\in H^s(\Omega,\Gamma_1)$, $s\geq 2$, be the solution of the problem (\ref{weakpb}) and let $u_h$ be the solution of the problem (\ref{dgpb}).
Then, $$\norm{u-u_h}_{L^2(\Omega, \Gamma_1)}+h\snorm{u-u_h}\lesssim h^{\min{(s,p+1)}} \norm{u}_{H^s(\Omega,\Gamma_1)},$$
provided $\gamma$ is chosen sufficiently large.
\end{theorem}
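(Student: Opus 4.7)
The plan is to split the statement into the energy estimate $\snorm{u-u_h}\lesssim h^{\min(s-1,p)}\norm{u}_{H^s(\Omega,\Gamma_1)}$ (which after multiplication by $h$ gives the second summand on the left-hand side with the claimed rate) and the $L^2(\Omega,\Gamma_1)$ estimate, which I would obtain by an Aubin--Nitsche duality argument powered by the regularity result of Theorem~\ref{th_principe}.

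For the energy estimate, I would decompose $u-u_h = \rho + \eta$ with $\rho = u - I_p^h u$ and $\eta = I_p^h u - u_h \in V^p(\mathcal{T}_h)$. Lemma~\ref{S_IntEstimates} directly controls $\snorm{\rho}$ by $h^{\min(s-1,p)}\norm{u}_{H^s(\Omega,\Gamma_1)}$. For $\eta$, since $\eta \in V^p(\mathcal{T}_h)$, coercivity~(\ref{coercivity_a}) gives $\snorm{\eta}^2 \lesssim \mathcal{A}_h(\eta,\eta)$; writing $\eta = (I_p^h u - u) + (u - u_h)$ in the second slot, Galerkin orthogonality~(\ref{Go}) kills the $u-u_h$ contribution, and continuity~(\ref{continuity_a}) applied to the remaining term yields $\snorm{\eta}^2 \lesssim \snorm{\rho}\snorm{\eta}$, hence $\snorm{\eta}\lesssim \snorm{\rho}$. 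A triangle inequality finishes this part.

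For the $L^2(\Omega,\Gamma_1)$ estimate, I would introduce the dual problem: find $\psi\in H^1(\Omega,\Gamma_1)$ such that
\begin{equation*}
a(\varphi,\psi)=(u-u_h,\varphi)_{L^2(\Omega)}+(u-u_h,\varphi)_{L^2(\Gamma_1)}\qquad\forall\,\varphi\in H^1(\Omega,\Gamma_1).
\end{equation*}
Because $\mathcal{A}_h$ (hence $a$) is symmetric, this problem has exactly the same structure as~(\ref{weakpb}) with data $f = u-u_h\in L^2(\Omega)$ and $g = (u-u_h)_{|\Gamma_1}\in L^2(\Gamma_1)$, so Theorem~\ref{th_principe} yields $\psi \in H^2(\Omega,\Gamma_1)$ with $\norm{\psi}_{H^2(\Omega,\Gamma_1)} \lesssim \norm{u-u_h}_{L^2(\Omega,\Gamma_1)}$. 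By strong consistency applied to $\psi$ and the symmetry of $\mathcal{A}_h$, testing with $\varphi = u - u_h$ gives
\begin{equation*}
\norm{u-u_h}_{L^2(\Omega,\Gamma_1)}^{2} = \mathcal{A}_h(\psi,u-u_h) = \mathcal{A}_h(u-u_h,\psi).
\end{equation*}
Subtracting $I_p^h\psi$ through Galerkin orthogonality, and then invoking continuity~(\ref{continuity_a}) and Lemma~\ref{S_IntEstimates} (applied to $\psi$ with $s=2$), I obtain
\begin{equation*}
\norm{u-u_h}_{L^2(\Omega,\Gamma_1)}^{2} \lesssim \snorm{u-u_h}\,\snorm{\psi-I_p^h\psi} \lesssim h\,\snorm{u-u_h}\,\norm{u-u_h}_{L^2(\Omega,\Gamma_1)},
\end{equation*}
and dividing, combined with the energy estimate from the previous step, gives the claimed bound $h^{\min(s,p+1)}\norm{u}_{H^s(\Omega,\Gamma_1)}$.

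The main obstacle is the strong-consistency identity $\mathcal{A}_h(\psi,v) = (u-u_h,v)_{L^2(\Omega)} + (u-u_h,v)_{L^2(\Gamma_1)}$ for non-conforming test functions $v\in H^2(\mathcal{T}_h)$: it requires element-wise integration by parts against $\Delta\psi$ in the bulk together with tangential integration by parts against $\Delta_\Gamma\psi$ along the boundary edges in $\mathcal{E}_h^{\Gamma_1}$, producing boundary-edge terms on $\mathcal{E}_h^0\cup\mathcal{E}_h^{\Gamma_2}$ and ridge terms on $\mathcal{R}_h^{\Gamma_1}$ that must match those in $\mathcal{B}_h$ and $b_h$; the fact that $\psi\in H^2(\Omega,\Gamma_1)$ makes all jumps $[\psi]$ vanish (taking care that $\Gamma_2^+$ and $\Gamma_2^-$ are identified periodically, and that corner ridges on $\Gamma_1$ are also identified), so the cancellation goes through, but the bookkeeping is the delicate point of the proof.
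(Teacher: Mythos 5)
Your proposal is correct and follows essentially the same route as the paper's proof: the energy estimate via the decomposition $u-u_h=(u-I_p^h u)+(I_p^h u-u_h)$ combined with coercivity (\ref{coercivity_a}), Galerkin orthogonality (\ref{Go}) and continuity (\ref{continuity_a}), and the $L^2(\Omega,\Gamma_1)$ bound via exactly the same Aubin--Nitsche duality argument, using the regularity of Theorem~\ref{th_principe} for the adjoint problem, the symmetry of $\mathcal{A}_h$, and Lemma~\ref{S_IntEstimates} applied to the dual solution with $s=2$. Your final paragraph correctly identifies the one step the paper asserts without detail (the consistency identity (\ref{adjpb}) for nonconforming $\chi\in H^2(\mathcal{T}_h)$, which the paper calls ``easy to see''), and your sketch of the required element-wise and tangential integrations by parts, with the periodic and ridge identifications, is the right way to fill it in.
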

\begin{proof}
By the triangular inequality we have
$$\snorm{u-u_h}\leq \snorm{u-I^h_p u} + \snorm{I^h_p u-u_h}.$$
We first bound the second term on the right-hand side.
Combining the Galerkin orthogonality (\ref{Go}) with the continuity and the coervicity estimates (\ref{continuity_a})-(\ref{coercivity_a}), we obtain:
\begin{align*}\snorm{I^h_p u-u_h}^{2}&\lesssim \mathcal{A}_h(I^h_p u-u_h,I^h_p u-u_h)\\
&= \mathcal{A}_h(I^h_p u-u,I^h_p u-u_h)+\mathcal{A}_h(u-u_h,I^h_p u-u_h)\\
&\lesssim \snorm{I^h_p u-u_h}\snorm{I^h_p u-u}.
\end{align*}
Therefore,
$$\snorm{I^h_p u-u_h}\lesssim \snorm{I^h_p u-u},$$
and
$$\snorm{u-u_h}\lesssim \snorm{u-I^h_p u}.$$
Then, using Lemma \ref{S_IntEstimates}, we get
\begin{equation}\label{energy_error}
\snorm{u-u_h}\lesssim h^{\min{(s-1,p)}} \norm{u}_{H^s(\Omega,\Gamma_1)}.
\end{equation}
For the $L^2$ error estimate, we consider the following adjoint problem: find $\zeta$ such that

\begin{equation*}
\left\{ \begin{array}{lll}
-\Delta \zeta &= u-u_h, &\text{ in }\Omega,\\
\partial_n \zeta &= -\alpha \zeta + \beta \Delta_{\Gamma} \zeta + (u-u_h),&\text{ on }\Gamma_1, \\
\end{array} \right.
\end{equation*}

As $u-u_h\in L^2(\Omega,\Gamma_1)$, using Theorem \ref{th_principe} yields an unique $\zeta\in H^{2}(\Omega,\Gamma_1)$ satisfiying the following stability estimate
$$\norm{\zeta}_{H^2(\Omega,\Gamma_1)}\lesssim \norm{u-u_h}_{L^2(\Omega,\Gamma_1)}.$$
Using Lemma \ref{S_IntEstimates} with $p=1$, we get
\begin{equation}\label{norm_dual}
\snorm{\zeta-I^h_1 \zeta}\lesssim h \norm{\zeta}_{H^2(\Omega,\Gamma_1)}\lesssim h \norm{u-u_h}_{L^{2}(\Omega,\Gamma_1)}.
\end{equation}
Since $\mathcal{A}_h(\cdot,\cdot)$ defined in (\ref{mcalah}) is symmetric, it is easy to see that it holds
\begin{equation}\label{adjpb}
\mathcal{A}_h(\chi, \zeta)=(u-u_h,\chi)_{L^2(\Omega)}+(u-u_h,\chi)_{L^2(\Gamma_1)} \qquad \forall \chi\in H^2(\mathcal{T}_h).
\end{equation}
Next, choosing $\chi=u-u_h$ in (\ref{adjpb}) and employing (\ref{Go}) together with (\ref{continuity_a})
, we find
\begin{eqnarray}
\norm{u-u_h}_{L^2(\Omega, \Gamma_1)}^2 &=& \mathcal{A}_h(u-u_h,\zeta)\nonumber\\
&=&\mathcal{A}_h(u-u_h, \zeta-I^h_1 \zeta)\nonumber\\
&\lesssim& \snorm{u-u_h}\snorm{\zeta-I^h_1 \zeta}.\nonumber
\end{eqnarray}
The thesis follows using (\ref{energy_error}) and (\ref{norm_dual}).
\end{proof}

\section{The parabolic problem and its fully-discretization}\label{s:parabolic}
In this section we employ the results obtained in the previous section to present and analyze a DG space semi-discretization combined with an backward Euler time advancing scheme for solving the following parabolic problem:
\begin{equation}
\left\{\label{mainevopb}\begin{array}{lll}
\partial_t u &= \Delta u + f, &\text{ in }\Omega,\ 0<t\leq T,\\
\partial_n u &= -\alpha u + \beta \Delta_{\Gamma}u - \lambda \partial_t u + g,&\text{ on }\Gamma_1,\ 0<t\leq T,\\
\multicolumn{2}{l}{\text{periodic boundary conditions},}&\text{ on }\Gamma_2,\ 0<t\leq T,\\
u_{|t=0}&=u_0, & \text{ in } \overline{\Omega},
\end{array}\right.
\end{equation}
where $T>0$, $\alpha, \beta, \lambda$ are positive constants and $f,g,u_0$ are (regular enough) given data.
The weak formulation of (\ref{mainevopb}) reads: for any $t\in(0,T]$, find $u
$ such that:
\begin{equation}\label{semiweakevopb}
\begin{cases}
(\partial_t u,v)_{L^2(\Omega)}+\lambda (\partial_t u,v)_{L^2(\Gamma_1)}+ a(u,v)=(f,v)_{L^2(\Omega)} +(g,v)_{L^2(\Gamma_1)},\\
u_{|t=0}=u_0,
\end{cases}
\end{equation}
for any $v \in H^1(\Omega,\Gamma_1)$.\\

It is possible to prove the following result dealing with the existence and (higher) regularity of the weak solution of \eqref{mainevopb}.
\begin{theorem}\label{thm:regularity}
If $u_0\in H^2(\Omega,\Gamma_1)$, $f\in H^1(0,T; L^2(\Omega))$ and $g\in H^1(0,T;L^2(\Gamma_1))$ and the following compatibility conditions 
holds
\begin{enumerate}
\item $u_1:=\Delta u_0 + f (0,\cdot) \in L^2(\Omega)$, 
\item $u_{1\vert_{\Gamma_1}}:=\beta \Delta_\Gamma u_0 -\partial_n u_0 - \alpha u_0 + g(0,\cdot) \in L^2 (\Gamma_1)$,
\end{enumerate}
then problem \eqref{mainevopb} admits a unique solution $u$ with 
$$ u \in C([0,T]; H^2(\Omega,\Gamma_1))\cap C^1([0,T]; L^2_\lambda(\Omega,\Gamma_1)) \cap  
H^1(0,T; H^1_\lambda(\Omega,\Gamma_1)).$$
Moreover, if $u_0\in H_\lambda^{2m} (\Omega;\Gamma_1)$, $\frac{d^k f}{d t^k}  \in H^1(0,T; H^{2m-2k-2}(\Omega))$ and  $\frac{d^k g}{d t^k}  \in H^1(0,T; H^{2m-2k-2}(\Gamma_1))$, for $k=0,\ldots,m-1$ and the following higher order compatibility conditions hold for $k=1,\ldots,m$
\begin{enumerate}
\item[3.] $u^{(k)}_1:=\Delta u_1^{(k-1)} + \frac{d^{k-1}}{d t^{k-1}} f (0,\cdot) \in L^2(\Omega)$
\item[4.] $u_{1\vert_{\Gamma_1}}^{(k)}:=\beta \Delta_\Gamma u_{1\vert_{\Gamma_1}}^{(k-1)} -
\partial_n  u_1^{(k-1)}  -  \alpha u_{1\vert_{\Gamma_1}}^{(k-1)}  + \frac{d^{k-1}}{d t^{k-1}} g(0,\cdot) \in L^2 (\Gamma_1)$,
\end{enumerate}
where we set $u^{(0)}_1:=u_1$ and $u_{1\vert_{\Gamma_1}}^{(0)}= u_{1\vert_{\Gamma_1}}$,  then it holds
for $k=0,\ldots,m-1$
\begin{eqnarray}
\frac{d^k u}{d t^k}  &\in& C([0,T]; H^{2m-2k}(\Omega,\Gamma_1))\cap C^1(0,T; H^{2m-2k-2}_\lambda(\Omega,\Gamma_1))\nonumber\\ 
&&\cap ~ H^1(0,T; H^{2m-2k-1}_\lambda(\Omega,\Gamma_1)). \label{higher}
\end{eqnarray}
\end{theorem}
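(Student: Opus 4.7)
The plan is to combine a Faedo--Galerkin construction in the evolution triple $H^1(\Omega,\Gamma_1)\hookrightarrow L^2_\lambda(\Omega,\Gamma_1)\hookrightarrow H^1(\Omega,\Gamma_1)^{*}$ with the elliptic regularity of Theorem~\ref{th_principe}, and then to bootstrap regularity in time by successive differentiation. Since $\alpha>0$, the bilinear form $a(\cdot,\cdot)$ introduced before \eqref{weakpb} is continuous and coercive on $H^1(\Omega,\Gamma_1)$, and the embedding $H^1(\Omega,\Gamma_1)\hookrightarrow L^2_\lambda(\Omega,\Gamma_1)$ is compact; hence there exists an orthonormal basis $\{w_j\}$ of $L^2_\lambda(\Omega,\Gamma_1)$ made of eigenfunctions of $a$ with respect to the $L^2_\lambda$-inner product. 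Setting $u^n(t)=\sum_{j=1}^n c_j(t)w_j$, testing \eqref{semiweakevopb} against $w_1,\dots,w_n$ produces a linear ODE for the coefficients, with initial datum the $L^2_\lambda$-projection of $u_0$.

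First I would test with $u^n$ and then with $\partial_t u^n$ to obtain the basic energy estimates in $L^\infty(0,T;L^2_\lambda)\cap L^2(0,T;H^1_\lambda)$; to bound $\partial_t u$ I differentiate the ODE in $t$ and again test with $\partial_t u^n$. At this stage the initial value $\partial_t u(0)$ must be interpreted via the equation, giving $\partial_t u(0)=u_1$ in $\Omega$ and $\lambda\partial_t u(0)=\lambda u_{1|\Gamma_1}$ on $\Gamma_1$: conditions 1--2 are exactly what is needed to make $\|\partial_t u^n(0)\|_{L^2_\lambda}$ uniformly bounded in $n$. A Gronwall step then yields $\partial_t u\in L^\infty(0,T;L^2_\lambda)\cap L^2(0,T;H^1_\lambda)$. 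For the $H^2$ bound I freeze $t$ and rewrite \eqref{mainevopb} as the stationary problem
\[
-\Delta u(t)=f(t)-\partial_t u(t)\ \text{in }\Omega,\qquad \partial_n u(t)+\alpha u(t)-\beta\Delta_\Gamma u(t)=g(t)-\lambda\partial_t u(t)\ \text{on }\Gamma_1,
\]
so that Theorem~\ref{th_principe} gives $u(t)\in H^2(\Omega,\Gamma_1)$ with $\|u(t)\|_{H^2(\Omega,\Gamma_1)}\lesssim \|f(t)\|_{L^2(\Omega)}+\|g(t)\|_{L^2(\Gamma_1)}+\|\partial_t u(t)\|_{L^2_\lambda(\Omega,\Gamma_1)}$. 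Continuity in $t$ transfers from the right-hand side to the solution, and uniqueness follows from the $L^2_\lambda$-energy identity applied to the difference of two solutions.

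For \eqref{higher} I would argue by induction on $m$. Differentiating the equation $k$ times yields a problem of the same structure for $v=\partial_t^k u$, with data $\partial_t^k f,\partial_t^k g$ and initial datum $u_1^{(k)}$ built recursively as in conditions 3--4; these higher-order compatibility hypotheses guarantee $u_1^{(k)}\in L^2_\lambda(\Omega,\Gamma_1)$, so the argument above applied to $v$ gives $\partial_t^k u\in L^\infty(0,T;L^2_\lambda)\cap L^2(0,T;H^1_\lambda)$. The higher-regularity estimate \eqref{estimate_stab2} of Theorem~\ref{th_principe}, applied at each fixed $t$ to the frozen-time equation whose forcing now involves $\partial_t^{k+1}u$, then upgrades this to the claim \eqref{higher}.

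The main obstacle is setting up the abstract evolution problem correctly: because $\partial_t u$ appears both in the bulk and on the trace on $\Gamma_1$, the natural pivot space is $L^2_\lambda(\Omega,\Gamma_1)$ rather than $L^2(\Omega)$, and care is needed to verify that $a$ is self-adjoint and coercive with respect to this non-standard inner product so that the spectral basis exists and integration by parts is legitimate. A secondary technicality, crucial for the induction in $m$, is to make rigorous the recursive definition of the data $u_1^{(k)}$ and to justify differentiating the equation in time at $t=0$ within the regularity class actually produced by the previous induction step.
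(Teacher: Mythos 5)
Your proposal follows essentially the same route as the paper's Appendix~A: a Faedo--Galerkin construction, energy estimates obtained by testing with $u^n$ and $\partial_t u^n$ and by differentiating the Galerkin system in time (with the compatibility conditions 1--2 used precisely to bound $\|\partial_t u^n(0)\|_{L^2_\lambda(\Omega,\Gamma_1)}$), spatial $H^2$-regularity recovered by freezing $t$ and invoking the elliptic result of Theorem~\ref{th_principe}, uniqueness via the $L^2_\lambda$-energy identity, and higher regularity \eqref{higher} by induction after differentiating the equation in time. The only deviation is your choice of Galerkin basis---eigenfunctions of $a(\cdot,\cdot)$ with respect to the $L^2_\lambda(\Omega,\Gamma_1)$ inner product, rather than the paper's Neumann--periodic Laplacian eigenfunctions in $L^2(\Omega)$, which is why the paper must invoke Lemma~\ref{gimischilemma} to approximate the initial datum in $H^1_\lambda$---a legitimate variant that does not change the structure of the argument.
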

\begin{proof}
See Appendix A.
\end{proof}

Employing the DG notations introduced in Section~\ref{dgdiscr}, the space semi-discretization of problem (\ref{mainevopb}) becomes: find
{\color{black} $u_h\in C^0(0,T;V^p(\mathcal{T}_h))$} such that, for any $t\in(0,T]$,
\begin{equation}\label{semiweakdgevopb}
\begin{cases}
(\partial_t u_h,v_h)_{L^2(\Omega)}+\lambda(\partial_t u_h,v_h)_{L^2(\Gamma_1)}+ \mathcal{A}_h(u_h,v_h)=(f,v_h)_{L^2(\Omega)} +(g,v_h)_{L^2(\Gamma_1)},\\
{u_h}_{|t=0}=u_{h0},
\end{cases}
\end{equation}
for any $v_h \in V^p(\mathcal{T}_h)$, where $u_{h0}\in V^p(\mathcal{T}_h)$ is the $L^2$-projection of $u_0$ into $V^p(\mathcal{T}_h)$.\\
The following result shows the existence of a unique solution $u_h$ of problem (\ref{semiweakdgevopb}).
\begin{theorem}
The semi-discrete problem (\ref{semiweakdgevopb}) admits a unique local solution.
\end{theorem}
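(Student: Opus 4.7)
The plan is to reduce the semi-discrete problem to a finite-dimensional system of linear ODEs and then invoke the classical Cauchy--Lipschitz theorem.

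First, I would fix a basis $\{\phi_j\}_{j=1}^{N}$ of the finite-dimensional space $V^p(\mathcal{T}_h)$ (where $N=\dim V^p(\mathcal{T}_h)$) and expand the unknown as $u_h(t,x) = \sum_{j=1}^{N} U_j(t)\,\phi_j(x)$. Testing (\ref{semiweakdgevopb}) against each basis function $\phi_i$ converts the variational problem into the linear ODE system
\begin{equation*}
(M + \lambda M_{\Gamma_1})\,\dot U(t) + A\,U(t) = F(t), \qquad U(0)=U_0,
\end{equation*}
where $M_{ij}=(\phi_j,\phi_i)_{L^2(\Omega)}$, $(M_{\Gamma_1})_{ij}=(\phi_j,\phi_i)_{L^2(\Gamma_1)}$, $A_{ij}=\mathcal{A}_h(\phi_j,\phi_i)$, $F_i(t)=(f(t),\phi_i)_{L^2(\Omega)}+(g(t),\phi_i)_{L^2(\Gamma_1)}$, and $U_0$ collects the coefficients of $u_{h0}$ in the chosen basis.

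Next I would argue that $M+\lambda M_{\Gamma_1}$ is invertible. Indeed $M$ is symmetric and positive definite since $\{\phi_j\}$ is a basis of $V^p(\mathcal{T}_h)\subset L^2(\Omega)$, while $M_{\Gamma_1}$ is symmetric positive semidefinite; since $\lambda>0$, the sum $M+\lambda M_{\Gamma_1}$ is symmetric positive definite, hence invertible. Therefore the system can be rewritten in normal form
\begin{equation*}
\dot U(t) = (M + \lambda M_{\Gamma_1})^{-1}\bigl(F(t) - A\,U(t)\bigr), \qquad U(0)=U_0,
\end{equation*}
which is a linear ODE with constant coefficient matrix multiplying $U$ and continuous (in $t$) right-hand side $F$, provided $f$ and $g$ are at least continuous in time in $L^2(\Omega)$ and $L^2(\Gamma_1)$ respectively.

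Finally, by the Cauchy--Lipschitz (Picard--Lindelöf) theorem this linear Cauchy problem admits a unique local (in fact global on $[0,T]$, thanks to linearity) solution $U\in C^1([0,T];\mathbb{R}^N)$, and the corresponding $u_h(t)=\sum_j U_j(t)\phi_j\in C^1([0,T];V^p(\mathcal{T}_h))$ is the unique solution of (\ref{semiweakdgevopb}). There is no real obstacle here: the only point worth being careful about is the invertibility of the generalized mass matrix $M+\lambda M_{\Gamma_1}$, which however follows at once from the positive definiteness of $M$ and the assumption $\lambda>0$.
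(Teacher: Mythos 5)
Your proposal is correct and follows essentially the same route as the paper: expansion in a basis of $V^p(\mathcal{T}_h)$, positive definiteness of the generalized mass matrix $M+\lambda M_{\Gamma_1}$ (the paper combines the two mass contributions into a single matrix $M$, which it observes is positive definite since the boundary part is positive semidefinite), and the Picard--Lindel\"of theorem. The only slight difference is that you assume $f,g$ continuous in time to get a $C^1$ solution, whereas the paper only requires $\mathbf{F}\in L^2(0,T;\mathbb{R}^N)$ and obtains $u_h\in H^1(0,T_N;V^p(\mathcal{T}_h))\subset C([0,T_N];V^p(\mathcal{T}_h))$ via the Carath\'eodory-type version of the existence theorem, a marginally weaker hypothesis.
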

\begin{proof}
As the proof is standard, we only sketch it. Let $\{\phi_j\}_{j= 1}^N$ be an orthogonal basis of $V^p(\mathcal{T}_h)$. The semi-discrete problem (\ref{semiweakdgevopb}) is equivalent to solve, for any $t\in(0,T]$, the following system of ordinary differential equations
\begin{equation}\label{semiweakdgevopbM}
\begin{cases}
(\partial_t u_h,\phi_j)_{L^2(\Omega)}+\lambda(\partial_t u_h,\phi_j)_{L^2(\Gamma_1)}+ \mathcal{A}_h(u_h,\phi_j)=(f,\phi_j)_{L^2(\Omega)} +(g,\phi_j)_{L^2(\Gamma_1)},\\
{u_h}_{|t=0}=u_{h0},
\end{cases}
\end{equation}
for $j=1,...,N$. Setting $u_h=\sum_{i=1}^N c_i(t) \phi_i$, (\ref{semiweakdgevopbM}) can be equivalently written as
\begin{equation}\label{semiweakdgevopbMa}
\begin{cases}
M\dot{\mathbf{c}}(t) + A \mathbf{c}(t) = \mathbf{F}(t),\\
\mathbf{c}(0)=\mathbf{c}^0,
\end{cases}
\end{equation}
where $\mathbf{c}(t)=\left(c_i(t)\right)_{1\leq i \leq N}$, $\mathbf{c}^0=\left(c_i^0\right)_{1\leq i \leq N}$ with $u_{h0}=\sum_{i=1}^N c_i^0 \phi_i,$ and, for $i,j=1,..,N$,
$$A_{ij}=\mathcal{A}(\phi_i,\phi_j), \qquad M_{ij}=(\phi_i,\phi_j)_{L^2(\Omega)} + \lambda(\phi_i,\phi_j)_{L^2(\Gamma_1)},$$
$$F_{i}=(f,\phi_i)_{L^2(\Omega)} + (g,\phi_i)_{L^2(\Gamma_1)}.$$
Since the matrix $M$ is positive definite and $\mathbf{F}(t)\in L^2(0,T;\mathbb{R}^N)$ invoking the well known Picard-Lindel\"of theorem yields the existence and uniqueness of a local solution $\mathbf{c}\in H^1(0,T_N; \mathbb{R})$, i.e. $u_h\in H^1(0,T_N; V^p(\mathcal{T}_h)\subset C([0,T_N];V^p(\mathcal{T}_h))$ with $T_N\in (0,T]$.
\end{proof}
The next result shows the stability of the semi-discrete solution of (\ref{semiweakdgevopb}).

\begin{lemma}
Let $u_h$ be the solution of (\ref{semiweakdgevopb}). Then it holds
\begin{multline}\label{(2)}
\norm{u_h(T)}^2_{L^2_\lambda(\Omega,\Gamma_1)} + \int_0^T\snorm{u_h}^2 dt \lesssim\\ \norm{u_{h0}}^2_{L^2_\lambda(\Omega,\Gamma_1)}+ \int_0^T(\norm{f}_{L^2(\Omega)}^2+\norm{g}_{L^2(\Gamma_1)}^2)dt.
\end{multline}
\end{lemma}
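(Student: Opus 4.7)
The plan is the standard energy argument for a semi-discrete parabolic problem, adapted to the combined volume/surface scalar product appearing in $L^2_\lambda(\Omega,\Gamma_1)$. First I would test \eqref{semiweakdgevopb} with the natural choice $v_h = u_h$. The two time-derivative terms then combine into
\[
(\partial_t u_h, u_h)_{L^2(\Omega)} + \lambda (\partial_t u_h, u_h)_{L^2(\Gamma_1)} = \tfrac{1}{2}\tfrac{d}{dt}\norm{u_h}^2_{L^2_\lambda(\Omega,\Gamma_1)},
\]
which is the only place where the weight $\lambda$ and the surface $L^2$ piece naturally compose into the norm appearing on the left-hand side of the claim.

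Next I would invoke the coercivity estimate \eqref{coercivity_a} on the discrete space, so that $\mathcal{A}_h(u_h,u_h) \gtrsim \snorm{u_h}^2$, yielding the differential inequality
\[
\tfrac{1}{2}\tfrac{d}{dt}\norm{u_h}^2_{L^2_\lambda(\Omega,\Gamma_1)} + C\snorm{u_h}^2 \leq (f,u_h)_{L^2(\Omega)} + (g,u_h)_{L^2(\Gamma_1)}.
\]
The right-hand side I would handle with Cauchy--Schwarz followed by Young's inequality, producing $\tfrac{1}{2}\norm{f}^2_{L^2(\Omega)} + \tfrac{1}{2}\norm{u_h}^2_{L^2(\Omega)}$ and an analogous boundary term. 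The surface piece $\norm{u_h}^2_{L^2(\Gamma_1)}$ can be partially absorbed into the $\alpha\norm{u_h}^2_{L^2(\Gamma_1)}$ contribution in $\snorm{u_h}^2$; the volume piece $\norm{u_h}^2_{L^2(\Omega)}$ cannot be absorbed (there is no $L^2(\Omega)$ term in $\snorm{\cdot}$), so I would simply bound $\norm{u_h}^2_{L^2(\Omega)} + \norm{u_h}^2_{L^2(\Gamma_1)} \lesssim \norm{u_h}^2_{L^2_\lambda(\Omega,\Gamma_1)}$ (up to a constant depending on $\lambda$) and keep this as the Gronwall-type term.

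Finally, I would integrate in time from $0$ to $T$, obtaining
\[
\tfrac{1}{2}\norm{u_h(T)}^2_{L^2_\lambda(\Omega,\Gamma_1)} + C\!\int_0^T\!\snorm{u_h}^2\,dt \lesssim \norm{u_{h0}}^2_{L^2_\lambda(\Omega,\Gamma_1)} + \!\int_0^T\!\!\bigl(\norm{f}^2_{L^2(\Omega)} + \norm{g}^2_{L^2(\Gamma_1)}\bigr)dt + \!\int_0^T\!\norm{u_h}^2_{L^2_\lambda(\Omega,\Gamma_1)}\,dt,
\]
and close the argument by Gronwall's lemma applied to the function $t\mapsto \norm{u_h(t)}^2_{L^2_\lambda(\Omega,\Gamma_1)}$; since $T$ is fixed, the exponential factor is absorbed into the hidden constant of $\lesssim$. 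The mildly subtle point, and the only real obstacle, is that $\snorm{\cdot}$ does not control $\norm{\cdot}_{L^2(\Omega)}$ (no volume Poincaré inequality is available with periodic/dynamic boundary conditions in a form that would let us absorb this term directly), which forces the use of Gronwall rather than a pointwise absorption; everything else is bookkeeping.
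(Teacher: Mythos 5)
Your proof is correct, but it takes a genuinely different route from the paper at the absorption step, and the obstacle you cite to justify that detour is not actually there. The paper tests with $v_h=u_h$ and uses the coercivity bound \eqref{coercivity_a} exactly as you do, but then invokes the Poincar\'e--Friedrichs inequalities for piecewise $H^1$ functions of Brenner \cite{Piecewise_ineq}, namely $\norm{v_h}_{L^2(\Omega)}\lesssim \big(\seminorm{v_h}_{H^1(\mathcal{T}_h)}^2+\norm{[v_h]}_{L^2(\mathcal{E}_h)}^2\big)^{1/2}$ together with its surface analogue on $\Gamma_1$. The key point you missed is that the jump set here includes the boundary edges on $\Gamma_1$, where $[v]_e=v_{|e}\,\mathbf{n}_e$, so the boundary trace enters the right-hand side; and since $\snorm{\cdot}$ contains $\seminorm{\cdot}_{H^1(\mathcal{T}_h)}$, the interior/periodic jumps weighted by $\sigma=\gamma/h\gtrsim 1$, and, crucially, the term $\alpha\norm{\cdot}^2_{L^2(\Gamma_1)}$ with $\alpha>0$ fixed, one obtains $\norm{u_h}_{L^2(\Omega)}+\norm{u_h}_{L^2(\Gamma_1)}\lesssim\snorm{u_h}$. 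In other words, the $\alpha$-term in the energy norm supplies exactly the boundary control that rules out constants and that a pure volume Poincar\'e inequality would indeed lack under periodic conditions. With this, the data terms are absorbed pointwise in time after Young's inequality, and a direct integration over $(0,T)$ gives \eqref{(2)} with a constant independent of $T$ --- no Gronwall lemma is needed. Your route (Cauchy--Schwarz, Young, integral Gronwall on $t\mapsto\norm{u_h(t)}^2_{L^2_\lambda(\Omega,\Gamma_1)}$) still establishes the lemma as stated, since the implicit constant is only required to be independent of the discretization parameters; but it yields a constant growing like $e^{CT}$, so it is strictly weaker than the paper's $T$-uniform bound, and your assertion that ``no volume Poincar\'e inequality is available'' should be corrected accordingly.
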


\begin{proof}
Choosing $v_h=u_h$ in (\ref{semiweakdgevopb}) and using (\ref{coercivity_a}) we get
\begin{equation*}
\frac{1}{2}\frac{d}{dt} \norm{u_h}^2_{L^2_\lambda(\Omega,\Gamma_1)} + \snorm{u_h}^2 \lesssim \big(\norm{f}_{L^2(\Omega)}+\norm{g}_{L^2(\Gamma_1)}\big)\norm{u_h}_{L^2_1(\Omega,\Gamma_1)}.
\end{equation*}
Using the arithmetic-geometric inequality and the Poincar\'e-Friedrichs inequality for functions in the broken Sobolev space $H^1(\mathcal{T}_h)$, i.e.,
\begin{equation*}\begin{array}{lll}
\norm{v_h}_{L^2(\Omega)}&\lesssim \big(\seminorm{v_h}_{H^1(\mathcal{T}_h)}^2+\norm{[v_h]}_{L^2(\mathcal{E}_h\cup\mathcal{E}_h^{\Gamma_2})}^2\big)^{1/2} & v_h \in H^1(\mathcal{T}_h)\\
\norm{v_h}_{L^2(\Gamma_1)}&\lesssim \big(\seminorm{v_h}_{H^1(\mathcal{E}_h^{\Gamma_1})}^2+ \norm{[v_h]}_{\mathcal{R}_h^{\Gamma_1}}^2\big)^{1/2} &v_h \in H^1(\mathcal{T}_h)
\end{array}
\end{equation*}
cf. \cite{Piecewise_ineq}, we obtain
\begin{equation}\label{(1)}
\frac{d}{dt} \norm{u_h}^2_{L^2_\lambda(\Omega,\Gamma_1)} + \snorm{u_h}^2 \lesssim \norm{f}_{L^2(\Omega)}^2+\norm{g}_{L^2(\Gamma_1)}^2.
\end{equation}
The thesis follows integrating between 0 and T and noting that\\
 $\norm{u_{h0}}^2_{L^2_\lambda(\Omega,\Gamma_1)}~\lesssim~ \norm{u_0}^2_{L^2_\lambda(\Omega,\Gamma_1)}$ because $u_{h0}$ is the $L^2$-projection of $u_0$ into $V^p(\mathcal{T}_h)$.
\end{proof}

Finally, we consider the fully discretization of problem (\ref{semiweakevopb}) by resorting to the Implicit Euler method with time-step $\Delta t>0$. 
Let $t_k = k \Delta t$, $0\leq k \leq K$, with $K=T/\Delta t$, and denote by $u_h^k, k\geq 0$,the approximation of $u_h(t_k)$.
The fully-discrete problem reads as follows: given $u_h^0=u_{h0}$, find $u_h^{k+1}\in V^p(\mathcal{T}_h),$ $0~<~k~\leq~K-1,$ such that
\begin{align}\label{(EI)}
&\left(\dfrac{u_h^{k+1}-u_h^{k}}{\Delta t},v_h\right)_{L^2(\Omega)}+ \lambda \left(\dfrac{u_h^{k+1}-u_h^{k}}{\Delta t},v_h\right)_{L^2(\Gamma_1)} + \mathcal{A}_h(u_h^{k+1},v_h)\\
&\qquad \qquad \qquad \qquad \qquad \qquad=(f(t_{k+1}),v_h)_{L^2(\Omega)} +(g(t_{k+1}),v_h)_{L^2(\Gamma_1)}\notag
\end{align}
for all $v_h \in V^p(\mathcal{T}_h)$.

\section{Stability and error estimates}
This section is devoted to show that the solution of problem (\ref{(EI)}) converges with optimal rate to the continuous solution of (\ref{mainevopb}). We first prove the following stability result. 

\begin{lemma}\label{data_dep}
Let $f^{k}=f(t_{k})$ and $g^{k}=g(t_{k})$, $k=1,...,K$. Then it holds 
\begin{multline}\label{(3)}
\norm{u_h^{K}}^2_{L^2_\lambda(\Omega,\Gamma_1)} + \Delta t \sum_{k=1}^{K} \snorm{u_h^{k}}^2\\
\lesssim \norm{u_{h0}}^2_{L^2_\lambda(\Omega,\Gamma_1)} + \Delta t \sum_{k=1}^{K} \bigg(\norm{f^k}_{L^2(\Omega)}^2+\norm{g^k}_{L^2(\Gamma_1)}^2\bigg).
\end{multline}
\end{lemma}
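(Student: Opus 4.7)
The plan is to mimic, at the fully discrete level, the energy argument used for the semi-discrete stability estimate \eqref{(2)}. The main ingredients will be: (i) the discrete time-derivative identity $(a-b,a)=\tfrac12(|a|^2-|b|^2+|a-b|^2)$, which plays the role of the chain rule $\tfrac{d}{dt}\tfrac12\|u_h\|^2$; (ii) the coercivity estimate \eqref{coercivity_a} applied to $v_h=u_h^{k+1}\in V^p(\mathcal{T}_h)$; (iii) the broken Poincar\'e--Friedrichs inequalities recalled in the proof of \eqref{(2)}, used to absorb $L^2$-type norms of $u_h^{k+1}$ into $\snorm{u_h^{k+1}}$; and (iv) a telescoping summation in $k$.

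Concretely, I would test \eqref{(EI)} with $v_h=u_h^{k+1}$ and multiply both sides by $\Delta t$. The first two terms on the left-hand side then combine, via the elementary identity $(a-b,a)_X=\tfrac12(\|a\|_X^2-\|b\|_X^2+\|a-b\|_X^2)$ applied with $X=L^2(\Omega)$ and $X=L^2(\Gamma_1)$ (weighted by $\lambda$), to yield
\begin{equation*}
\tfrac{1}{2}\bigl(\norm{u_h^{k+1}}^2_{L^2_\lambda(\Omega,\Gamma_1)}-\norm{u_h^{k}}^2_{L^2_\lambda(\Omega,\Gamma_1)}\bigr)
+\tfrac{1}{2}\norm{u_h^{k+1}-u_h^{k}}^2_{L^2_\lambda(\Omega,\Gamma_1)}
+\Delta t\,\mathcal{A}_h(u_h^{k+1},u_h^{k+1}).
\end{equation*}
The positive squared-increment term is discarded, and coercivity \eqref{coercivity_a} turns the last term into a lower bound of the form $c\,\Delta t\,\snorm{u_h^{k+1}}^2$.

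For the right-hand side I would apply Cauchy--Schwarz together with the weighted arithmetic-geometric inequality to obtain, for any $\epsilon>0$,
\begin{equation*}
\Delta t\bigl[(f^{k+1},u_h^{k+1})_{L^2(\Omega)}+(g^{k+1},u_h^{k+1})_{L^2(\Gamma_1)}\bigr]
\leq \tfrac{\Delta t}{4\epsilon}\bigl(\norm{f^{k+1}}_{L^2(\Omega)}^2+\norm{g^{k+1}}_{L^2(\Gamma_1)}^2\bigr)
+\epsilon\,\Delta t\,\norm{u_h^{k+1}}^2_{L^2_1(\Omega,\Gamma_1)}.
\end{equation*}
The last term is then controlled by $\snorm{u_h^{k+1}}^2$ using the broken Poincar\'e--Friedrichs inequalities already invoked in the proof of \eqref{(2)}, since $\norm{u_h^{k+1}}_{L^2(\Omega)}^2+\norm{u_h^{k+1}}_{L^2(\Gamma_1)}^2\lesssim \snorm{u_h^{k+1}}^2$. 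Choosing $\epsilon$ small enough allows this contribution to be absorbed into $c\,\Delta t\,\snorm{u_h^{k+1}}^2$ on the left.

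Finally, I would sum the resulting inequality over $k=0,\ldots,K-1$: the first two terms telescope, leaving $\tfrac{1}{2}\norm{u_h^{K}}^2_{L^2_\lambda(\Omega,\Gamma_1)}-\tfrac{1}{2}\norm{u_{h0}}^2_{L^2_\lambda(\Omega,\Gamma_1)}$, while the dissipative and data contributions accumulate to give exactly \eqref{(3)}. No discrete Gronwall argument is needed, because the offending $L^2$-norm of $u_h^{k+1}$ on the right-hand side is fully absorbed by the energy norm rather than left as an additive term. The only delicate point is making sure the same constant from coercivity is large enough to accommodate the absorption after both the arithmetic-geometric step and the Poincar\'e--Friedrichs step; this is guaranteed by taking $\gamma$ sufficiently large (as already required for \eqref{coercivity_a}) and $\epsilon$ sufficiently small, independently of $h$ and $\Delta t$.
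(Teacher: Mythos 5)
Your proposal is correct and follows essentially the same route as the paper's proof: testing \eqref{(EI)} with $v_h=u_h^{k+1}$, applying the identity $(z-y,z)=\tfrac12\norm{z}^2-\tfrac12\norm{y}^2+\tfrac12\norm{z-y}^2$ together with coercivity \eqref{coercivity_a} and Cauchy--Schwarz, then Young's and the broken Poincar\'e--Friedrichs inequalities, and finally summing over $k$. Your write-up is in fact more detailed than the paper's (which compresses the absorption step into one line), but the argument is identical in substance.
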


\begin{proof}
We choose $v_h=u_h^{k+1}$ in (\ref{(EI)}). Using (\ref{coercivity_a}), the identity
$$( z-y,z)=\frac{1}{2}\norm{z}^2-\frac{1}{2}\norm{y}^2+\frac{1}{2}\norm{z-y}^2,$$
and the Cauchy-Schwarz inequality, we obtain
\begin{multline*}
\norm{u_h^{k+1}}^2_{L^2_\lambda(\Omega,\Gamma_1)}-\norm{u_h^{k}}^2_{L^2_\lambda(\Omega,\Gamma_1)}+\norm{u_h^{k+1}-u_h^{k}}^2_{L^2_\lambda(\Omega,\Gamma_1)} + \Delta t \snorm{u_h^{k+1}}^2\\
\lesssim \Delta t \left ( \norm{f^{k+1}}_{L^2(\Omega)}\norm{u_h^{k+1}}_{L^2(\Omega)}+\norm{g^{k+1}}_{L^2(\Gamma_1)}\norm{u_h^{k+1}}_{L^2(\Gamma_1)}\right ).
\end{multline*}
Employing Young's inequality, Poincar\'e-Friedrichs' inequality and summing over $k$ we get the thesis.
\end{proof}

We next state the main result of this section.

\begin{theorem}
Let $u\in C([0,T]; H^s(\Omega,\Gamma_1))\cap H^1(0,T; L^2_\lambda(\Omega,\Gamma_1))$, $s\geq~2$, be the solution of (\ref{semiweakevopb}) and let $u_h$ be the solution of (\ref{(EI)}). If  $\partial_t u \in L^2(0,T; H^s(\Omega,\Gamma_1))$, $\partial_t^2 u \in L^2(0,T; L^2(\Omega,\Gamma_1))$ and $u_h^0$ satisfies 
\begin{equation}\label{ipo_datum}
\norm{u_0-u_h^0}_{L^2_\lambda(\Omega,\Gamma_1)}\lesssim h^{\min(s,p+1)} \norm{u_0}_{H^s(\mathcal{T}_h)},
\end{equation} 
then
\begin{align*}
\norm{u^K-u_h^K}^2_{L^2_\lambda(\Omega,\Gamma_1)}\lesssim & h^{2\min(s,p+1)} \bigg(\norm{u^K}_{H^s_\lambda(\Omega,\Gamma_1)}^2 +\norm{u_0}^2_{H^s_\lambda(\Omega,\Gamma_1)} \\
&\qquad \qquad \qquad \qquad+ \int_{0}^{T}\norm{\partial_t u(t)}^2_{H^s_\lambda(\Omega,\Gamma_1)}\ dt \bigg)\\
&  + \Delta t^2 \int_{0}^{T}\norm{\partial_t^2 u(t)}^2_{L^2_\lambda(\Omega,\Gamma_1)}\ dt,
\end{align*}
and
\begin{align*}
&\Delta t \sum_{k=1}^{K} \snorm{u^k-u_h^k}^2\lesssim h^{2\min(s-1,p)} \bigg(\Delta t \sum_{k=1}^{K}\norm{u^k}_{H^s_\lambda(\Omega,\Gamma_1)}^2\\
&\qquad \qquad \qquad \qquad \qquad \qquad +h^2\norm{u_0}^2_{H^s_\lambda(\Omega,\Gamma_1)} +h^2\int_{0}^{T}\norm{\partial_t u(t)}^2_{H^s_\lambda(\Omega,\Gamma_1)}\ dt \bigg)\\
&\qquad \qquad\qquad \qquad \qquad+ \Delta t^2 \int_{0}^{T}\norm{\partial_t^2 u(t)}^2_{L^2_\lambda(\Omega,\Gamma_1)}\ dt,
\end{align*}
where $u^k=u(t_k),\ k=1,...,K$.
\end{theorem}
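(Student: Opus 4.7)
The plan is to follow the classical Wheeler--Dupont strategy adapted to the DG setting. First I would introduce the elliptic (Ritz) projection $R_h : H^2(\Omega,\Gamma_1) \to V^p(\mathcal{T}_h)$ associated with the DG bilinear form,
$$\mathcal{A}_h(R_h v - v, w_h) = 0 \qquad \forall w_h \in V^p(\mathcal{T}_h),$$
whose well-posedness follows from coercivity \eqref{coercivity_a}. Because $R_h v - v$ solves a perturbed stationary problem whose data depend linearly on $v$, Theorem \ref{StatTh} yields the optimal approximation bounds
$$\norm{v - R_h v}_{L^2(\Omega,\Gamma_1)} + h \snorm{v - R_h v} \lesssim h^{\min(s,p+1)} \norm{v}_{H^s(\Omega,\Gamma_1)}.$$
Since $R_h$ is linear and time-independent it commutes with $\partial_t$, so the same bound applies to $\partial_t u - R_h \partial_t u$ in terms of $\norm{\partial_t u}_{H^s(\Omega,\Gamma_1)}$. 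I would then split $u^k - u_h^k = \rho^k + \theta^k$ where $\rho^k := u^k - R_h u^k$ and $\theta^k := R_h u^k - u_h^k \in V^p(\mathcal{T}_h)$, reducing the task to bounding $\theta^k$.

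Next I would derive the discrete error equation. Thanks to the regularity supplied by Theorem \ref{thm:regularity}, the exact solution satisfies \eqref{semiweakevopb} tested against $v_h \in V^p(\mathcal{T}_h)$ with $\mathcal{A}_h(u,\cdot)$ in place of $a(u,\cdot)$ (strong consistency of $\mathcal{A}_h$). Subtracting \eqref{(EI)} from this at $t_{k+1}$ and using $\mathcal{A}_h(\rho^{k+1}, v_h) = 0$ leaves
$$\Big(\frac{\theta^{k+1}-\theta^k}{\Delta t},v_h\Big)_{L^2_\lambda(\Omega,\Gamma_1)} + \mathcal{A}_h(\theta^{k+1},v_h) = -(\tau^{k+1},v_h)_{L^2_\lambda(\Omega,\Gamma_1)} - \Big(\frac{\rho^{k+1}-\rho^k}{\Delta t},v_h\Big)_{L^2_\lambda(\Omega,\Gamma_1)},$$
with $\tau^{k+1} := \partial_t u(t_{k+1}) - (u^{k+1}-u^k)/\Delta t$ the backward Euler truncation error.

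Testing with $v_h = \theta^{k+1}$, invoking the identity $(z-y,z) = \tfrac12(\norm{z}^2 - \norm{y}^2 + \norm{z-y}^2)$, coercivity \eqref{coercivity_a}, and Cauchy--Schwarz--Young yields
$$\norm{\theta^{k+1}}_{L^2_\lambda(\Omega,\Gamma_1)}^2 - \norm{\theta^k}_{L^2_\lambda(\Omega,\Gamma_1)}^2 + \Delta t \snorm{\theta^{k+1}}^2 \lesssim \Delta t \norm{\tau^{k+1}}_{L^2_\lambda(\Omega,\Gamma_1)}^2 + \Delta t \Big\|\tfrac{\rho^{k+1}-\rho^k}{\Delta t}\Big\|_{L^2_\lambda(\Omega,\Gamma_1)}^2.$$
The standard bounds $\norm{\tau^{k+1}}_{L^2_\lambda}^2 \leq \Delta t \int_{t_k}^{t_{k+1}} \norm{\partial_t^2 u}_{L^2_\lambda}^2\, ds$ (Taylor with integral remainder) and $\|(\rho^{k+1}-\rho^k)/\Delta t\|_{L^2_\lambda}^2 \leq \Delta t^{-1} \int_{t_k}^{t_{k+1}} \norm{\partial_t \rho}_{L^2_\lambda}^2\, ds$ (Jensen), combined with the Ritz estimate applied to $\partial_t \rho$, allow telescoping from $k=0$ to $K-1$. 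The initial bound $\norm{\theta^0}_{L^2_\lambda} \lesssim h^{\min(s,p+1)} \norm{u_0}_{H^s_\lambda(\Omega,\Gamma_1)}$ follows from assumption \eqref{ipo_datum} and the triangle inequality, yielding the target estimates for $\norm{\theta^K}_{L^2_\lambda}^2$ and $\Delta t \sum_k \snorm{\theta^k}^2$.

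The two final bounds then follow from $\norm{u^K-u_h^K}_{L^2_\lambda} \leq \norm{\rho^K}_{L^2_\lambda} + \norm{\theta^K}_{L^2_\lambda}$ and $\snorm{u^k-u_h^k} \leq \snorm{\rho^k} + \snorm{\theta^k}$ together with the approximation estimates of $R_h$; the identity $\min(s,p+1) = \min(s-1,p)+1$ reconciles the two apparent exponents in the energy-norm statement (one factor of $h^2$ converts an $L^2_\lambda$-type contribution to an $\snorm{\cdot}$-type one). The main technical subtlety I foresee is not conceptual but bookkeeping: one must verify that the Ritz projection inherits \emph{both} the $L^2(\Omega,\Gamma_1)$ and the $\snorm{\cdot}$ estimates of Theorem \ref{StatTh}, and carefully carry the $\lambda$-weighted inner product through the coupled interior/boundary terms (including the symmetry of $\mathcal{A}_h$ needed for the duality step inside Theorem \ref{StatTh}). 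Strong consistency for the evolutionary problem requires only a brief check, since $\mathcal{A}_h$ is time-independent and the continuous solution is smooth enough by Theorem \ref{thm:regularity}.
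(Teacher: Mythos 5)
Your proposal is correct and follows essentially the same route as the paper: the elliptic projection $P$ defined through $\mathcal{A}_h$ with the bound \eqref{(7)}, the splitting $u^k-u_h^k=\rho^k+\theta^k$, the discrete error equation handled by the stability argument of Lemma \ref{data_dep}, Taylor with integral remainder for the truncation error, Jensen plus commutation of $P$ with $\partial_t$ for the $(\rho^{k+1}-\rho^k)/\Delta t$ term, the initial bound from \eqref{ipo_datum}, and a final triangle inequality. The only (cosmetic) difference is that the paper lumps your $\tau^{k+1}$ and $(\rho^{k+1}-\rho^k)/\Delta t$ into a single consistency term $E^{k+1}$ and splits it afterwards, whereas you split at the level of the error equation.
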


\begin{proof}
We first define the elliptic projection $P:H^2(\Omega,\Gamma_1)\rightarrow V^p(\mathcal{T}_h)$ as
\begin{equation}\label{(5)}
\mathcal{A}_h(Pw-w,v_h)=0 \qquad \forall v_h \in V^p(\mathcal{T}_h),
\end{equation}
where $\mathcal{A}_h(\cdot,\cdot)$ is defined as in (\ref{mcalah}).
We note (see Theorem \ref{StatTh}) that $P$ satisfies the bound
\begin{equation}\label{(7)}
\norm{Pw-w}_{L^2_\lambda(\Omega,\Gamma_1)} + h \snorm{Pw-w}\lesssim h^{\min(s,p+1)}\norm{w}_{H^s_\lambda(\Omega,\Gamma_1)},
\end{equation}
for all $w \in H^s(\Omega, \Gamma_1)$, $s\geq 2$.
We next write $u^k-u_h^k=(u^k-Pu^k)+(Pu^k-u_h^k)$
and start to focus on the second term.  
Considering problem (\ref{semiweakdgevopb}) at time $t_{k+1}$, we easily get
\begin{multline}\label{(8)}
\left(\dfrac{Pu^{k+1}-Pu^{k}}{\Delta t},v_h\right)_{L^2(\Omega)}+ \lambda \left(\dfrac{Pu^{k+1}-Pu^{k}}{\Delta t},v_h\right)_{L^2(\Gamma_1)} + \mathcal{A}_h(Pu^{k+1},v_h)\\
 =(f(t_k),v_h)_{L^2(\Omega)} +(g(t_k),v_h)_{L^2(\Gamma_1)}-( E^{k+1},v_h)_{L^2(\Omega)}-\lambda( E^{k+1},v_h)_{L^2(\Gamma_1)},
\end{multline}
for all $v_h \in V^p(\mathcal{T}_h)$, where
$$E^{k+1}=\partial_t u(t_{k+1}) - \frac{1}{\Delta t}(Pu^{k+1}-Pu^{k}).$$
Subtracting (\ref{(EI)}) from (\ref{(8)}), we get that $e_h^k=Pu^k-u_h^k$ satisfies
\begin{align*}
\left(\frac{e_h^{k+1}-e_h^{k}}{\Delta t},v_h\right)_{L^2(\Omega)}&+ \lambda  \left(\frac{e_h^{k+1}-e_h^{k}}{\Delta t},v_h\right)_{L^2(\Gamma_1)} + \mathcal{A}_h(e_h^{k+1},v_h)\\
&=-( E^{k+1},v_h)_{L^2(\Omega)}-\lambda( E^{k+1},v_h)_{L^2(\Gamma_1)},
\end{align*}
for all $v_h \in V^p(\mathcal{T}_h).$
Then, reasoning as in the proof of Lemma \ref{data_dep} , we obtain
\begin{equation}\label{(10)}
\norm{e_h^{K}}^2_{L^2_\lambda(\Omega,\Gamma_1)} + \Delta t \sum_{k=1}^{K} \snorm{e_h^{k}}^2\lesssim \norm{e_h^0}^2_{L^2_\lambda(\Omega,\Gamma_1)} + \Delta t \sum_{k=1}^{K}\norm{E^{k}}_{L^2_\lambda(\Omega,\Gamma_1)}^2.
\end{equation} 
We bound the first term on the right-hand side of \eqref{(10)} using (\ref{ipo_datum}) and (\ref{(7)}):
\begin{align}\label{(11)}
\norm{e_h^0}_{L^2_\lambda(\Omega,\Gamma_1)}&=\norm{Pu_0-u_{h0}}_{L^2_\lambda(\Omega,\Gamma_1)}\notag \\
&\leq\norm{Pu_0-u_0}_{L^2_\lambda(\Omega,\Gamma_1)}+\norm{u_0-u_{h0}}_{L^2_\lambda(\Omega,\Gamma_1)}\notag \\
&\lesssim h^{\min(s,p+1)} \norm{u_0}_{H^s(\mathcal{T}_h)}.
\end{align}
In order to bound the second term on  the right-hand side of (\ref{(10)}) we observe that it holds:
\begin{align*}
E^{k+1}&=\bigg( \partial_t u(t_{k+1}) - \frac{u^{k+1}-u^k}{\Delta t}\bigg) + \frac{(u^{k+1}-Pu^{k+1})-(u^k-Pu^k)}{\Delta t}\\
&= -\frac{1}{\Delta t}\int_{t_k}^{t_{k+1}}\big(t-t_k\big)\ \partial_t^2 u(t)\ dt + \frac{1}{\Delta t}\int_{t_k}^{t_{k+1}} \partial_t \big(u(t)-Pu(t)\big)\ dt\\
\end{align*}
where we employed Taylor's formula.
Therefore, employing the commutation of the operators $P$ and $\partial t$, we have
\begin{align*}
\norm{E^{k+1}}_{L^2_\lambda(\Omega,\Gamma_1)}^2&\lesssim \frac{1}{\Delta t} \left| \left|\int_{t_k}^{t_{k+1}}\big(t-t_k\big)\ \partial_t^2 u(t)\ dt\right| \right|^2_{L^2_\lambda(\Omega,\Gamma_1)}\\
&+\frac{1}{\Delta t}\left| \left|\int_{t_k}^{t_{k+1}}\big(\partial_t u(t)-P \partial_t u(t)\big) dt\right| \right|^2_{L^2_\lambda(\Omega,\Gamma_1)}.
\end{align*}
Using the Cauchy-Schwarz inequality we get
\begin{align*}
&\left|\left|\int_{t_k}^{t_{k+1}}\big(t-t_k\big)\partial_t^2 u(t)\ dt\right|\right|_{L^2_\lambda(\Omega,\Gamma_1)}\\
&\qquad\leq \left(\int_{t_k}^{t_{k+1}}(t-t_k)^2\ dt \right)^{1/2}\left(\int_{t_k}^{t_{k+1}}\norm{\partial_t^2 u(t)}^2_{L^2_\lambda(\Omega,\Gamma_1)}\ dt \right)^{1/2}\\
&\qquad \lesssim \Delta t^{3/2} \left(\int_{t_k}^{t_{k+1}}\norm{\partial_t^2 u(t)}^2_{L^2_\lambda(\Omega,\Gamma_1)}\ dt\right)^{1/2}.
\end{align*}
Hence, 
$$\frac{1}{\Delta t}\left|\left|\int_{t_k}^{t_{k+1}}\big(t-t_k\big)\partial_t^2 u(t)\ dt\right|\right|_{L^2_\lambda(\Omega,\Gamma_1)}^2\lesssim \Delta t^2 \int_{t_k}^{t_{k+1}}\norm{\partial_t^2 u(t)}^2_{L^2_\lambda(\Omega,\Gamma_1)}\ dt.$$
Employing $\partial_t u \in L^2(0,T; H^s(\mathcal{T}_h))$, $s\geq 2$, and (\ref{(7)}), we obtain
\begin{align*}
&\left|\left|\int_{t_k}^{t_{k+1}}\big(\partial_t u(t)-P \partial_t u(t)\big) dt\right|\right|_{L^2_\lambda(\Omega,\Gamma_1)}\\
&\qquad \leq \left(\int_{t_k}^{t_{k+1}}(1)^2\ dt \right)^{1/2}\left(\int_{t_k}^{t_{k+1}}\norm{\partial_t u(t)-P \partial_t u(t)}^2_{L^2_\lambda(\Omega,\Gamma_1)}\ dt \right)^{1/2}\\
&\qquad \lesssim \Delta t^{1/2}\left(\int_{t_k}^{t_{k+1}}\norm{\partial_t u(t)-P \partial_t u(t)}^2_{L^2_\lambda(\Omega,\Gamma_1)}\ dt \right)^{1/2}\\
&\qquad \lesssim \Delta t^{1/2} h^{\min(s,p+1)} \left(\int_{t_k}^{t_{k+1}}\norm{\partial_t u(t)}^2_{H^s_\lambda(\Omega,\Gamma_1)}\ dt\right)^{1/2}.
\end{align*}
Hence,
\begin{multline}\frac{1}{\Delta t}\left|\left|\int_{t_k}^{t_{k+1}}\big(\partial_t u(t)-P \partial_t u(t)\big) dt\right|\right|_{L^2_\lambda(\Omega,\Gamma_1)}^2\\
\lesssim h^{2\min(s,p+1)} \int_{t_k}^{t_{k+1}}\norm{\partial_t u(t)}^2_{H^s_\lambda(\Omega,\Gamma_1)}\ dt.
\end{multline}
Finally, summing over $k$ we get
\begin{align}\label{(12)}
&\Delta t \sum_{k=1}^{K} \norm{E^{k}}_{L^2_\lambda(\Omega,\Gamma_1)}^2 \\
&\qquad \lesssim \Delta t^2 \int_{0}^{T}\norm{\partial_t^2 u(t)}^2_{L^2_\lambda(\Omega,\Gamma_1)}\ dt + h^{2\min(s,p+1)} \int_{0}^{T}\norm{\partial_t u(t)}^2_{H^s_\lambda(\Omega,\Gamma_1)}\ dt,\notag
\end{align}
which concludes the bound for $e_h^{k}$.
Finally, the thesis follow employing the triangle inequality and the bounds (\ref{(10)})-(\ref{(11)}) together with (\ref{(7)})-(\ref{(12)}).
\end{proof}

\section{Numerical experiments}\label{s:num}

In this section we present some numerical results to validate our theoretical estimates.
In the first two examples (cf Sections \ref{ex1} and \ref{ex2}) we consider a test case with periodic boundary conditions and validate our theoretical error estimates. In the last example (cf Section~\ref{ex3}) we show that our theoretical results seem to hold in the case of more general boundary conditions, provided the exact solution of problem (\ref{mainevopb}) is smooth enough.

\subsection{Example 1}\label{ex1}

We consider problem (\ref{mainevopb}) on $\Omega=(0,1)^2$ and choose $f$ and $g$ such that $u=e^{-10t}(1-\cos(2 \pi x))\cos(4 \pi y)$ is the exact solution.\\
We have tested our scheme on a sequence of uniformly refined structured triangular grids with meshsize $h=\sqrt{2}/2^\ell,\ \ell=2,...,7$. In those sets of numerical experiments we have measured the error $e(T)=u(T)-u_h(T)$ at the final observation time $T=0.001$ in the $\norm{\cdot}_{L^2(\Omega)}$ and $\norm{\cdot}_{L^2(\Gamma_1)}$ norms. We have also measured the quantity $(\Delta t \sum_{k=1}^{K}\snorm{e^k}^2)^{1/2}$, being $e^k=u^k-u_h^k$ .\\
In the first set of experiments we used piecewise linear elements ($p=1$) and the following parameters: $\sigma=10$, $\Delta t=10^{-5}$, $\lambda= 10$, $\beta= 5$ $\alpha= 2$. The computed errors and the corresponding computed convergence rates are reported in Table \ref{tab:tcoscos}.
We have repeated the same set of experiments employing piecewise quadratic elements ($p=2$); the results are reported in Table \ref{tab:tcoscos2}.
From the results shown in Table \ref{tab:tcoscos} and Table \ref{tab:tcoscos2}, it is clear that the expected convergence rates are obtained.

\begin{table}[!h]
\begin{center}
\begin{tabular}{|c|c|c|c|c|c|c|}
\hline
$h$ & $\norm{e(T)}_{L^2(\Omega)}$ & rate & $\norm{e(T)}_{L^2(\Gamma_1)}$ & rate & $(\Delta t \sum_{k=1}^{K}\snorm{e^k}^2)^{1/2}$& rate\\
\hline
$\sqrt{2}/2^2$ & 1.836048e-01 & - & 1.908256e-01 & - & 2.281359e-01& \\
$\sqrt{2}/2^3$ & 5.455936e-02 & 1.75 & 5.035380e-02 & 1.92 & 1.186343e-01& 0.94\\
$\sqrt{2}/2^4$ & 1.451833e-02 & 1.91 & 1.278655e-02 & 1.98 & 5.939199e-02& 1.00\\
$\sqrt{2}/2^5$ & 3.688202e-03 & 1.98 & 3.208881e-03 & 1.99 & 2.962468e-02& 1.00\\
$\sqrt{2}/2^6$ & 9.258142e-04 & 1.99 & 8.028862e-04 & 2.00 & 1.480150e-02& 1.00\\
$\sqrt{2}/2^7$ & 2.316573e-04 & 2.00 & 2.006754e-04 & 2.00 & 7.399580e-03& 1.00\\
\hline
\end{tabular}
   \caption{Example 1. Computed errors, $p=1$, $\sigma=10$, $\Delta t=10^{-5}$, $T=0.001$, $\lambda= 10$, $\beta= 5$, $\alpha= 2$.}
\label{tab:tcoscos}
\end{center}
\end{table}

\begin{table}[!h]
\begin{center}
\begin{tabular}{|c|c|c|c|c|c|c|}
\hline
$h$ & $\norm{e(T)}_{L^2(\Omega)}$ & rate & $\norm{e(T)}_{L^2(\Gamma_1)}$ & rate & $(\Delta t \sum_{k=1}^{K}\snorm{e^k}^2)^{1/2}$& rate\\
\hline
$\sqrt{2}/2^2$ & 2.470397e-02 & - & 1.751588e-02 & - & 5.281897e-02& -\\
$\sqrt{2}/2^3$ & 3.027272e-03 & 3.03 & 2.232268e-03 & 2.97 & 1.405198e-02& 1.91\\
$\sqrt{2}/2^4$ & 3.827204e-04 & 2.98 & 2.822643e-04 & 2.98 & 3.602372e-03& 1.96\\
$\sqrt{2}/2^5$ & 4.797615e-05 & 3.00 & 3.539247e-05 & 3.00 & 9.081101e-04& 1.99\\
$\sqrt{2}/2^6$ &  5.992844e-06 & 3.00 & 4.421683e-06 & 3.00 & 2.276766e-04& 2.00\\
$\sqrt{2}/2^7$ & 7.507474e-07 & 3.00 & 5.632338e-07 & 2.97 & 5.593742e-05& 2.02\\
\hline
\end{tabular}
   \caption{Example 1. Computed errors, $p=2$, $\sigma=10$, $\Delta t=10^{-5}$, $T=0.001$, $\lambda= 10$, $\beta= 5$ $\alpha= 2$.}
\label{tab:tcoscos2}
\end{center}
\end{table}

\subsection{Example 2}\label{ex2}

In the second example, we explore the dependencies of the error on the time-step $\Delta t$.
To this aim, we set $f$ and $g$ as in Section~\ref{ex1}.
In Table \ref{tab:tempo} we report the computed errors and convergence rates obtained with  piecewise linear elements ($p=1$) and the following parameters: $k=7$, $\sigma=10$, $T=0.1$, $\lambda= 10$, $\beta= 5$, $\alpha= 2$, $h=\sqrt{2}/2^{7}$ and vary the time integration step $\Delta t$. The numerical results are in agreement with the theoretical estimate.

\begin{table}[!h]
\begin{center}
\begin{tabular}{|c|c|c|c|c|}
\hline
$\Delta t$ & $\norm{e(T)}_{L^2(\Omega)}$ & rate & $\norm{e(T)}_{L^2(\Gamma_1)}$ & rate\\
\hline
$0.1\times2^{0}$& 2.682138e-02 & - & 8.678953e-02 & -\\
$0.1\times2^{-1}$& 1.487984e-02 & 0.85 & 4.905898e-02 & 0.82\\
$0.1\times2^{-2}$& 7.889826e-03 & 0.92 & 2.630006e-02 & 0.90\\
$0.1\times2^{-3}$& 4.050365e-03 & 0.96 & 1.360794e-02 & 0.95 \\
$0.1\times2^{-4}$& 2.028095e-03 & 1.00 & 6.881036e-03 & 0.98 \\
$0.1\times2^{-5}$& 9.897726e-04 & 1.03 & 3.415646e-03 & 1.01 \\
$0.1\times2^{-6}$& 4.664660e-04 & 1.08 & 1.656678e-03 & 1.04 \\
\hline
\end{tabular}
   \caption{Example 2. Computed errors, $k=7$, $p=1$, $\sigma=10$, $T=0.1$, $\lambda= 10$, $\beta= 5$ $\alpha= 2$.}
\label{tab:tempo}
\end{center}
\end{table}

\subsection{Example 3}\label{ex3}

Finally, we consider problem (\ref{mainevopb}) on $\Omega=(0,1)^2$ with homogeneous Dirichlet boundary conditions applied $\Gamma_2$ and  on $\Gamma_1$.
In this case we choose $f$ and $g$ such that $u=t (1-\cos(2\pi x) ) \cos(\pi y)$ is the exact solution.
In Table \ref{tab:tcos1-cos} we report the computed errors and computed convergence rates at the final time $T=0.1$. Those results have been obtained with piecewise linear elements ($p=1$) and with the following choice of parameters: $\sigma=10$, $\Delta t=0.001$, $\lambda= 10$, $\beta= 5$ $\alpha= 2$.
We have ran the same set of experiments employing piecewise quadratic elements ($p=2$); the computed results are shown in Table \ref{tab:tcos1-cos2}.
The results reported in Table \ref{tab:tcos1-cos} and Table \ref{tab:tcos1-cos2} clearly confirm the theoretical rates of convergence even in the cases of Dirichlet boundary conditions instead of periodic ones, at least whenever the exact solution is sufficiently smooth (see Remark \ref{smoothassumption}).

\begin{table}[!h]
\begin{center}
\begin{tabular}{|c|c|c|c|c|c|c|}
\hline
$h$ & $\norm{e(T)}_{L^2(\Omega)}$ & rate & $\norm{e(T)}_{L^2(\Gamma_1)}$ & rate & $(\Delta t \sum_{k=1}^{K}\snorm{e^k}^2)^{1/2}$& rate\\
\hline
$\sqrt{2}/2^2$ & 9.185918e-03 & - & 1.111234e-02 & - & 1.347859e-01& - \\
$\sqrt{2}/2^3$ & 2.704819e-03 & 1.76 & 2.849404e-03 & 1.96 & 6.413467e-02 & 1.07 \\
$\sqrt{2}/2^4$ & 7.279868e-04 & 1.89 & 7.169369e-04 & 1.99 & 3.155837e-02 & 1.02\\
$\sqrt{2}/2^5$ & 1.875124e-04 & 1.96 & 1.797070e-04 & 2.00 & 1.571196e-02 & 1.01\\
$\sqrt{2}/2^6$ & 4.745622e-05 & 1.98 & 4.501545e-05 & 2.00 & 7.847606e-03 & 1.00\\
$\sqrt{2}/2^7$ & 1.192746e-05 & 1.99 & 1.127502e-05 & 2.00 & 3.922783e-03 & 1.00\\
\hline
\end{tabular}
   \caption{Example 3. Computed errors, $p=1$, $\sigma=10$, $\Delta t=0.001$, $T=0.1$, $\lambda= 10$, $\beta= 5$ $\alpha= 2$.}
\label{tab:tcos1-cos}
\end{center}
\end{table}

\begin{table}[!h]
\begin{center}
\begin{tabular}{|c|c|c|c|c|c|c|}
\hline
$h$ & $\norm{e(T)}_{L^2(\Omega)}$ & rate & $\norm{e(T)}_{L^2(\Gamma_1)}$ & rate & $(\Delta t \sum_{k=1}^{K}\snorm{e^k}^2)^{1/2}$& rate\\
\hline
$\sqrt{2}/2^2$ & 1.239177e-03 & - & 1.607590e-03 & - & 2.589798e-02& - \\
$\sqrt{2}/2^3$ &1.543449e-04 & 3.01 & 2.189412e-04 & 2.88 & 6.771702e-03 & 1.93 \\
$\sqrt{2}/2^4$ &1.911957e-05 & 3.01 & 2.788057e-05 & 2.97 & 1.715537e-03 & 1.98 \\
$\sqrt{2}/2^5$ &2.386211e-06 & 3.00 & 3.496808e-06 & 3.00 & 4.307186e-04 & 1.99 \\
$\sqrt{2}/2^6$ &2.990873e-07 & 3.00 & 4.364171e-07 & 3.00 & 1.079691e-04 & 2.00 \\
$\sqrt{2}/2^7$ & 3.777961e-08 & 2.98  & 5.420558e-08 & 3.01 & 2.621607e-05 & 2.04 \\
\hline
\end{tabular}
   \caption{Example 3. Computed errors, $p=2$, $\sigma=10$, $\Delta t=0.001$, $T=0.1$, $\lambda= 10$, $\beta= 5$ $\alpha= 2$.}
\label{tab:tcos1-cos2}
\end{center}
\end{table}

\appendix
\section{Proof of Theorem \ref{thm:regularity}}
\begin{proof}[Proof of Theorem  \ref{thm:regularity}]
As the proof follows is based on standard arguments (see, e.g., \cite[Chapter 7.1]{Evans}), we only sketch the main steps.\\

\noindent 1. {\em Construction of the discrete space}.
Let $\{e_i\}_{i\geq 1}$ be an orthonormal basis of $L^2(\Omega)$ such that $$\int_{\Omega} \nabla e_i \cdot \nabla z = \lambda_i \int_{\Omega} e_i z \qquad \forall z\in H^1(\Omega),\ i\geq 1,$$
i.e., $\lambda_i$ and $e_i$ are respectively the eigenvalues and eigenfunctions of the weak form of eigenvalue problem $-\Delta e = \lambda e$ with homogeneous Neumann and periodic boundary conditions on $\Gamma_1$ and $\Gamma_2$, respectively.
Reordering $\{e_i\}_{i\geq 1}$ such that $\lambda_1=0$, it is easy to see that there holds
$$\int_{\Omega}\nabla e_i \cdot \nabla e_j = 0,\ \ \ \text{ for }i\neq j\qquad \text{ and }\qquad \int_{\Omega}|\nabla e_i|^2=\lambda_i>0,\ \ \ \text{ for } i>1.$$
Let $V^n=\text{span}\{e_i : i=1,...,n\}$, $n\geq 1$, and let $u_0^n$ be the $L^2(\Omega)$- projection of $u_0$ on $V^n$.
Since the domain is regular, the eigenfunctions $e_i$ belong to $H^2(\Omega)$.\\

\noindent 2. {\em Finite-dimensional approximation of (\ref{semiweakevopb})}. We introduce the following finite dimensional problem: find $u^n\in H^1(0,T;V^n)$ such that, for $t\in(0,T)$,
\begin{equation}\label{discretesemiweakevopb}
\begin{cases}
(\partial_t u^n,z)_{L^2(\Omega)}+\lambda (\partial_t u^n,z)_{L^2(\Gamma_1)}+ a(u^n,z)=(f,z)_{L^2(\Omega)} +(g,z)_{L^2(\Gamma_1)},\\
{u^n}_{|t=0}=u_0^n,
\end{cases}
\end{equation}
for all $z \in V^n$, 
In the sequel we prove that problem (\ref{discretesemiweakevopb}) admits a unique solution in $H^1(0,T; V^n)$. We write $$u^n(t)=\sum_{j=1}^n u_j(t)e_j.$$ The problem (\ref{discretesemiweakevopb}) is equivalent to find $\textbf{u}(t)=(u_1(t), ..., u_n(t))^T\in H^1(0,T;\mathbb{R}^n)$ such that, for each $t\in (0,T)$,
\begin{equation*}
\begin{cases}
M \dot{\mathbf{u}}(t) + A \mathbf{u}(t) = \mathbf{F}(t),\\
\mathbf{u}(0)=(u_{0,1}, ..., u_{0,n})^T,
\end{cases}
\end{equation*}
where, for $i,j=1,..,n$,
$$M_{ij}=M^\Omega + \lambda M^{\Gamma_1}:=\delta_{ij}+ \lambda(e_i,e_j)_{L^2(\Gamma_1)},$$
$$A_{ij}=a(e_i,e_j),\qquad F_{i}=(f,e_i)_{L^2(\Omega)} + (g,e_i)_{L^2(\Gamma_1)}, \qquad u_{0,i}=(u_0, e_i)_{L^2(\Omega)}.$$
Since the matrix $M^{\Gamma_1}$ is semi-positive definite, we see that $M$ is positive definite. In addition, $\mathbf{F}(t)\in L^2(0, T; \mathbb{R}^{n})$ and $A:\mathbb{R}^n\rightarrow \mathbb{R}^n$ is Lipschitz continuous.
Therefore, by standard existence theory of ordinary differential equations, there exists a unique solution $\mathbf{u}(t)$ for a.e. $0\leq t\leq T$.\\

\noindent 3.  {\em Energy estimates}.
Taking $z=u^n$ in (\ref{discretesemiweakevopb}) and using the Cauchy-Schwarz inequality, we obtain
\begin{multline}\label{en_estimate}
\frac{d}{dt}\left( \norm{u^n}_{L^2_\lambda(\Omega, \Gamma_1)}^2\right) + \norm{\nabla u^n}^2_{L^2(\Omega)} + \alpha\norm{u^n}^2_{L^2(\Gamma_1)} + \beta\norm{\nabla_\Gamma u^n}^2_{L^2(\Gamma_1)}\\
\lesssim \norm{u^n}_{L^2_\lambda(\Omega, \Gamma_1)}^2 +\norm{f}^2_{L^2(\Omega)} + \norm{g}^2_{L^2(\Gamma_1)}
\end{multline}
for a.e. $t \in [0,T]$.
Using the differential form of the Gronwall's inequality, data regularity and Lemma \ref{gimischilemma}
we obtain
$$\max_{0\leq t\leq T}\norm{u^n(t)}_{L^2_\lambda(\Omega,\Gamma_1)} \lesssim \norm{u_0}_{L^2_\lambda(\Omega, \Gamma_1)}^2 +\norm{f}^2_{L^2(0,T;L^2(\Omega))} + \norm{g}^2_{L^2(0,T;L^2(\Gamma_1))}\leq C.$$
Integrating (\ref{en_estimate}) in $[0,T]$ and employing the above inequality together with data regularity  and Lemma \ref{gimischilemma} we get
$$\norm{u^n}_{L^2(0,T;H^1_\lambda(\Omega,\Gamma_1))}\lesssim \norm{u_0}_{L^2_\lambda(\Omega, \Gamma_1)}^2 +\norm{f}^2_{L^2(0,T;L^2(\Omega))} + \norm{g}^2_{L^2(0,T;L^2(\Gamma_1))}\leq C.$$
On the other hand, taking $z=\partial_t u^n$ in (\ref{discretesemiweakevopb}), integrating in $t$ and using the Cauchy-Schwarz inequality, we obtain, for every $\tau \in (0,T]$,
\begin{align*}
&\frac{1}{2} \int_0^\tau\norm{\partial_t u^n}^2_{L^2_\lambda(\Omega,\Gamma_1)} + \frac{1}{2}\norm{\nabla u^n(\tau)}^2_{L^2(\Omega)} + \frac{\alpha}{2}\norm{u^n(\tau)}^2_{L^2(\Gamma_1)} + \frac{\beta}{2}\norm{\nabla_\Gamma u^n(\tau)}^2_{L^2(\Gamma_1)}\\
&\qquad\qquad\qquad\ \ \ \ \ \ \leq \frac{1}{2}\norm{\nabla u^n_0}^2_{L^2(\Omega)} + \frac{\alpha}{2}\norm{ u^n_0}^2_{L^2(\Gamma_1)}+\frac{\beta}{2}\norm{\nabla_\Gamma u^n_0}^2_{L^2(\Omega)}\\
&\qquad\qquad\qquad\qquad+ \frac{1}{2} \int_0^\tau \norm{f}^2_{L^2(\Omega)} + \frac{1}{2\lambda} \int_0^\tau \norm{g}^2_{L^2(\Gamma_1)},
\end{align*}
where the right-hand side of the above inequality can be bounded using Lemma \ref{gimischilemma} and data regularity. 

Moreover, differentiating \eqref{discretesemiweakevopb} with respect to $t$ and setting $\tilde u^n:= \partial_t u^n$ we get for any $t\in [0,T]$
\begin{equation}\label{eq:diff-discr}
(\partial_t \tilde u^n,z)_{L^2(\Omega)}+\lambda (\partial_t \tilde u^n,z)_{L^2(\Gamma_1)}+ a(\tilde u^n,z)=(\partial_t f,z)_{L^2(\Omega)} +(\partial_t g,z)_{L^2(\Gamma_1)},\\
\end{equation}
 for all $z\in V^n$. Testing \eqref{eq:diff-discr} with $z=\tilde u^n$, it is easy to show that it holds
 \begin{eqnarray}
 &&\| \partial_t u^n\|^2_{L_\lambda^2(\Omega,\Gamma_1)} + 
 \int_0^t  \| \partial_t u^n (s)\|^2_{H_\lambda^1(\Omega,\Gamma_1)}\, ds 
 \lesssim  
 \int_0^t \|\partial_t f (s) \|^2_{L^2(\Omega)}~ds \nonumber\\ 
 &&\qquad+ \int_0^t \|\partial_t g (s) \|^2_{L^2(\Gamma_1) }\, ds + 
 \| \partial_t u^n (0)\|^2_{L_\lambda^2(\Omega,\Gamma_1)}. 
 \end{eqnarray} 
Taking $t=0$ in \eqref{discretesemiweakevopb}, testing with $z=\partial_t u^n (0)$, integrating by parts and employing the Cauchy-Schwarz inequality once more, we obtain 
 $$ 
 \|\partial_t u^n(0)\|^2_{L^2_\lambda(\Omega,\Gamma_1)} \lesssim \| u^n(0)\|^2_{H^2_\lambda(\Omega,\Gamma_1)} +  \| f(0,\cdot) \|^2_{L^2(\Omega)} + \| g(0,\cdot) \|^2_{L^2(\Gamma_1)},
 $$
 whose right-hand side can be bounded by resorting to compatibility conditions, Lemma \ref{gimischilemma} and data regularity assumptions.
 
 Hence, collecting all the above results, we get 
 $$u^n \in C([0,T]; H_\lambda^1(\Omega,\Gamma_1))\cap C^1(0,T; L^2_\lambda(\Omega,\Gamma_1)) \cap  
H^1(0,T; H^1_\lambda(\Omega,\Gamma_1)).$$
4. {\it Existence of the solution $u$}.  Resorting to subsequences $\{u_{m_l}\}_{l=1}^{\infty}$ of $\{u_m\}_{m=1}^{\infty}$, passing to the limit  for $m\to \infty$ and using standard arguments it is possible to prove that there exists a solution $u$ to problem (\ref{semiweakevopb}) with 
 $$u  \in C([0,T]; H_\lambda^1(\Omega,\Gamma_1))\cap C^1(0,T; L^2_\lambda(\Omega,\Gamma_1)) \cap  
H^1(0,T; H^1_\lambda(\Omega,\Gamma_1)).$$

\noindent 5. {\em Uniqueness of the weak solution}. Let $u_1$ and $u_2$ be two solutions of weak problem (\ref{semiweakevopb}) and set $w=u_1-u_2$. By definition, taking $z=w$, we get from (\ref{semiweakevopb})
$$\frac{d}{dt}\left( \norm{w}_{L^2_\lambda(\Omega, \Gamma_1)}^2\right) + \norm{\nabla w}^2_{L^2(\Omega)} + \alpha\norm{w}^2_{L^2(\Gamma_1)} + \beta\norm{\nabla_\Gamma w}^2_{L^2(\Gamma_1)}=0,$$
that implies $w=0$, or $u_1=u_2$ for a.e. $0\leq t \leq T$.

\noindent 6. {\em Improved regularity}. 
Rewriting (\ref{semiweakevopb}) as
$$a(u,v)=(\tilde{f},v)_{L^2(\Omega)} +(\tilde{g},v)_{L^2(\Gamma_1)},$$
where $\tilde{f}= f - \partial_t u \in L^2(0,T,L^2(\Omega))$ and $\tilde{g}= g - \partial_t u \in L^2(0,T,L^2(\Gamma_1))$. Employing Theorem \ref{th_principe} we get $u(t)\in H^2_\lambda(\Omega,\Gamma_1)$ for a.e. $0\leq t \leq T$.\\

\noindent 6. {\em Higher regularity}. We prove \eqref{higher} by induction. From the above discussion  the result holds true for $m=1$. Assume now the validity of \eqref{higher} for some $m>1$, together with the associated higher order compatibility and regularity conditions. 
Differentiating \eqref{mainevopb} with respect to $t$, it is immediate to verify that $\tilde u=\partial_t u$ verifies
\begin{equation}
\left\{\label{tildeevopb}\begin{array}{lll}
\partial_t \tilde u &= \Delta \tilde u + \tilde f, &\text{ in }\Omega,\ 0<t\leq T,\\
\partial_n \tilde u &= -\alpha \tilde u + \beta \Delta_{\Gamma}\tilde u - \lambda \partial_t \tilde u + \tilde g,&\text{ on }\Gamma_1,\ 0<t\leq T,\\
\multicolumn{2}{l}{\text{periodic boundary conditions},}&\text{ on }\Gamma_2,\ 0<t\leq T,\\
\tilde u_{|t=0}&=\tilde u_0, & \text{ in } \overline{\Omega},
\end{array}\right.
\end{equation}
where $\tilde f=\partial_t f$, $\tilde g=\partial_t g$, $\tilde u_0 = f(0,\cdot) + \Delta u_0$ in $\Omega$ and 
$\tilde u_0|_\Gamma =  \beta \Delta_{\Gamma}u_0  - \partial_n u_0 -\alpha u_0  + g(0,\cdot) $ on $\Gamma$.
Since  the pair $(f,g)$ satisfies the higher order compatibility conditions for $k=1,\ldots,m$ then the pair 
$(\tilde f,\tilde g)$ satisfies the same type of compatibility conditions for $k=1,\ldots,m-1$. Hence, it follows for $k=0,\ldots,m-1$
\begin{eqnarray}
\frac{d^k \tilde u}{d t^k}  &\in& C([0,T]; H^{2m-2k}(\Omega,\Gamma_1))\cap C^1([0,T]; H^{2m-2k-2}_\lambda(\Omega,\Gamma_1))\nonumber\\ 
&&\cap ~ H^1(0,T; H^{2m-2k-1}_\lambda(\Omega,\Gamma_1))
\end{eqnarray}
which immediately implies the validity of \eqref{higher} for $k=0,\ldots,m$.

\end{proof}

The following result has been proof in \cite[Lemmas $4.4$ and $4.5$]{gimischi}.
\begin{lemma}\label{gimischilemma}
Let $z \in Z = \{z\in H^2(\Omega)\ |\ \partial_n z = 0 \text{ on } \Gamma_1\}$. If $z_n$ is the $L^2(\Omega)$-projection of $z$ on $V^n$, then
\begin{equation}\label{boundinitial}
\norm{z_n - z}_{H^1_\lambda(\Omega;\Gamma_1)}\rightarrow 0 \text{ when } n\rightarrow \infty.
\end{equation}
Let $V_{\infty}=\cup_{n=1}^\infty V_n$. Moreover, $Z$ and $V_{\infty}$ are dense in $H^1_\lambda(\Omega;\Gamma_1)$.
\end{lemma}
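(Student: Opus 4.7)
The plan has two parts: (i) prove $\|z_n - z\|_{H^1_\lambda(\Omega;\Gamma_1)} \to 0$ for every $z \in Z$, and (ii) deduce density of $Z$ and of $V_\infty$ in $H^1_\lambda(\Omega;\Gamma_1)$.

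For (i), I would first write $z = \sum_{i \ge 1} c_i e_i$ with $c_i = (z,e_i)_{L^2(\Omega)}$, so that by definition $z_n = \sum_{i=1}^n c_i e_i$ is the $L^2$-projection. The key identity is $(-\Delta z, e_i)_{L^2(\Omega)} = \lambda_i c_i$, obtained by integrating by parts twice and using the Neumann condition $\partial_n z = 0$ on $\Gamma_1$ together with the periodic matching on $\Gamma_2$ (once on $z$, once on $e_i$). Since $-\Delta z \in L^2(\Omega)$, Parseval's identity gives $\sum_i \lambda_i^2 c_i^2 = \|\Delta z\|_{L^2(\Omega)}^2 < \infty$, hence $\|\Delta(z-z_n)\|_{L^2(\Omega)} \to 0$. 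The difference $w = z-z_n$ lies in $H^2(\Omega)$ and inherits the Neumann/periodic boundary conditions, so the standard elliptic regularity estimate $\|w\|_{H^2(\Omega)} \lesssim \|\Delta w\|_{L^2(\Omega)} + \|w\|_{L^2(\Omega)}$ for this problem yields $z_n \to z$ in $H^2(\Omega)$. Continuity of the trace map $H^2(\Omega) \to H^{3/2}(\Gamma_1) \hookrightarrow H^1(\Gamma_1)$ then produces $z_n|_{\Gamma_1} \to z|_{\Gamma_1}$ in $H^1(\Gamma_1)$, which together with the $H^1(\Omega)$ convergence concludes the $H^1_\lambda$ convergence.

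For (ii), every $e_i$ belongs to $H^2(\Omega)$ and satisfies $\partial_n e_i = 0$ on $\Gamma_1$, so $V_\infty \subseteq Z$; combined with (i), density of $V_\infty$ in $H^1_\lambda$ reduces to density of $Z$. I would prove the latter by a duality argument. Let $u \in H^1_\lambda$ be orthogonal to $Z$ in the $H^1_\lambda$ inner product. Testing with $z \in C^\infty_c(\Omega) \subseteq Z$ gives $-\Delta u + u = 0$ distributionally in $\Omega$; combining this with the periodic condition on $\Gamma_2$ and the hypothesis $u|_{\Gamma_1} \in H^1(\Gamma_1)$, elliptic regularity promotes $u$ to $H^2(\Omega)$ with $\partial_n u \in L^2(\Gamma_1)$. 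Integrating by parts in the orthogonality condition and then extending from $\{z|_{\Gamma_1} : z \in Z\} = H^{3/2}(\Gamma_1)$ by density in $H^1(\Gamma_1)$ yields
\begin{equation*}
(\partial_n u,\psi)_{L^2(\Gamma_1)} + \lambda(u,\psi)_{L^2(\Gamma_1)} + \lambda(\nabla_\Gamma u, \nabla_\Gamma \psi)_{L^2(\Gamma_1)} = 0 \qquad \forall \psi \in H^1(\Gamma_1).
\end{equation*}
Setting $\psi = u|_{\Gamma_1}$ and, separately, testing the PDE against $u$ (which after integration by parts and cancellation on $\Gamma_2$ gives $(\partial_n u, u)_{\Gamma_1} = \|u\|_{H^1(\Omega)}^2$), the two identities combine to $\|u\|_{H^1(\Omega)}^2 + \lambda \|u\|_{H^1(\Gamma_1)}^2 = 0$, forcing $u = 0$ and hence $Z^\perp = \{0\}$ in $H^1_\lambda$.

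The most delicate step is the regularity claim in (ii), promoting $u \in H^1(\Omega)$ with $u|_{\Gamma_1} \in H^1(\Gamma_1)$ to $u \in H^2(\Omega)$ with $\partial_n u \in L^2(\Gamma_1)$: the corners at $\Gamma_1 \cap \Gamma_2$ are neutralized by the periodic identification on $\Gamma_2$, which effectively makes $\Omega$ a smooth cylindrical strip, so that Lions--Magenes-type regularity for $-\Delta u + u = 0$ with $H^1(\Gamma_1)$ Dirichlet data applies and the interior $H^2$ regularity is standard. Everything else in the argument is routine $L^2$-basis expansion, integration by parts, and continuity of the trace operator.
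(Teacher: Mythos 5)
The paper never proves this lemma: it is imported wholesale, with the one-line attribution to \cite[Lemmas 4.4 and 4.5]{gimischi}, so there is no internal proof to compare against and your self-contained argument is by construction a different route. Judged on its own merits, your part (i) is sound and in fact delivers more than required: the identity $(-\Delta z, e_i)_{L^2(\Omega)}=\lambda_i c_i$, Parseval, and the a priori bound $\|w\|_{H^2(\Omega)}\lesssim \|\Delta w\|_{L^2(\Omega)}+\|w\|_{L^2(\Omega)}$ for the Neumann/periodic problem give $z_n\to z$ in $H^2(\Omega)$, and the $H^1_\lambda$ statement follows by continuity of the trace $H^2(\Omega)\to H^{3/2}(\Gamma_1)\hookrightarrow H^1(\Gamma_1)$. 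One caveat you use but should state explicitly: the set $Z$ as written imposes only $\partial_n z=0$ on $\Gamma_1$, yet your key integration by parts (and even the weak eigenvalue relation itself, since periodic modes containing $\sin(2\pi k x)$ factors are periodic on $\Gamma_2$ but not Neumann there) requires $z$ and $\nabla z$ to match periodically across $\Gamma_2$. The lemma only makes sense with $\Omega$ read as the periodic cylinder, periodicity being built into $Z$, $V^n$ and $H^1_\lambda$; the paper is equally silent on this point, but a proof written from scratch should not be.

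The one genuine misstep is the regularity claim in part (ii): a solution of $-\Delta u+u=0$ with Dirichlet trace merely in $H^1(\Gamma_1)$ is in general \emph{not} in $H^2(\Omega)$, since $u\in H^2(\Omega)$ forces $u|_{\Gamma_1}\in H^{3/2}(\Gamma_1)$, and on the smooth cylinder this is sharp; $H^{3/2}(\Omega)$ is all you can assert. The gap is repairable because your computation only uses two consequences of the overclaim: that $\partial_n u\in L^2(\Gamma_1)$, and that the Green identity $(\nabla u,\nabla z)_{L^2(\Omega)}=-(\Delta u,z)_{L^2(\Omega)}+(\partial_n u,z)_{L^2(\Gamma_1)}$ is valid for $z\in Z$ and for $z=u$. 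Both hold for $u\in H^{3/2}(\Omega)$ with $\Delta u=u\in L^2(\Omega)$, by the $L^2$ theory of the Dirichlet problem with $H^1(\partial\Omega)$ data (Rellich-type estimates of Ne\v{c}as and Jerison--Kenig, or Lions--Magenes trace theory on the space $\{v\in H^{3/2}(\Omega):\Delta v\in L^2(\Omega)\}$). With that substitution the argument closes correctly: traces of $Z$ on $\Gamma_1$ fill $H^{3/2}(\Gamma_1)$ (lift the trace pair $(\psi,0)$ into $H^2$ of the cylinder), which is dense in $H^1(\Gamma_1)$ while your boundary functional is $H^1(\Gamma_1)$-continuous, so the identity extends to all $\psi\in H^1(\Gamma_1)$; choosing $\psi=u|_{\Gamma_1}$ and combining with $(\partial_n u,u)_{L^2(\Gamma_1)}=\|u\|^2_{H^1(\Omega)}$ forces $u=0$, hence $Z$ is dense, and $V_\infty\subseteq Z$ together with part (i) transfers density to $V_\infty$. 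In short: the structure and conclusions are correct and fill a hole the paper leaves to the literature, but the $H^2$ regularity step must be weakened to the true $H^{3/2}$/$L^2$-normal-trace version.
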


\end{document}